\newtheorem{proposition}{Proposition}[section]
\newtheorem{theorem}{Theorem}[section]
\newtheorem{lemma}{Lemma}[section]
\newtheorem{remark}{Remark}[section]
\numberwithin{equation}{section}
\newcommand{\N}{{\mathbb N}}
\newcommand{\R}{{\mathbb R}}
\newcommand{\eps}{{\varepsilon}}
\newcommand{\ds}{\displaystyle}
\numberwithin{equation}{section}
\newcommand{\Pl}{(\mathcal{SM}_\lambda)}
\newcommand{\kf}{\chi_{W^b}}
\newcommand{\kfG}{\chi_{W^b_{G}}}
\numberwithin{equation}{section}
\renewcommand*{\@fnsymbol}[1]{\ensuremath{\ifcase#1\or *\or **\or \ddagger\or
   \mathsection\or \mathparagraph\or \|\or **\or \dagger\dagger
   \or \ddagger\ddagger \else\@ctrerr\fi}}
\g@addto@macro{\endabstract}{\@setabstract}
\newcommand{\authorfootnotes}{\renewcommand\thefootnote{\@fnsymbol\c@footnote}}%
\def\d{{{\rm d}%
}v_{g}}
\begin{document}

\begin{center}
  \large
\textbf{\textsc{Schr\"odinger-Maxwell systems on Hadamard
manifolds}}\par \bigskip

  \normalsize
  \authorfootnotes
 Csaba Farkas\footnote{Email address of Cs. Farkas: farkas.csaba2008@gmail.com}\textsuperscript{,1,3}
 Alexandru Krist\'aly \footnote{Email address of A. Krist\'aly: alexandrukristaly@yahoo.com}\textsuperscript{,2,3}
  \par \bigskip
\textsuperscript{1}{\footnotesize Department of Mathematics and Informatics, Sapientia University, Tg. Mure\c s, Romania}\\
\textsuperscript{2} {\footnotesize Department of Economics, Babe\c s-Bolyai University, Cluj-Napoca, Romania} \\
\textsuperscript{3}{\footnotesize  Institute of Applied Mathematics,
\'Obuda University, 1034 Budapest, Hungary}

 \par \bigskip

  \today
\end{center}
\hrulefill \\
{\bf Abstract.} {\footnotesize In this paper we study nonlinear
Schr\"odinger-Maxwell systems on $n-$dimensional Hadamard manifolds,
$3\leq n\leq 5.$ The main difficulty resides in the lack of
compactness of such manifolds which is recovered by exploring
suitable isometric actions. By combining variational arguments, some
existence, uniqueness and multiplicity of isometry-invariant weak
solutions are established for the Schr\"odinger-Maxwell system
depending on the behavior of the nonlinear term.}

\noindent \hrulefill

%\vskip 2em \tableofcontents

%\vspace{1cm}
\section{Introduction and main results}\label{section1}

\subsection{Motivation}

The Schr\"odinger-Maxwell system
\begin{equation}\label{eredeti}
\ \left\{
\begin{array}{lll}
-\frac{\hbar^2}{2m}\Delta u+\omega u+e u\phi=f(x,u) & \mbox{in} &
\mathbb{R}^3 , \\
-\Delta\phi=4\pi e u^2 & \mbox{in} & \mathbb{R}^3,
\end{array}%
\right.
\end{equation}
 describes the statical behavior  of a charged non-relativistic
quantum mechanical particle interacting with the electromagnetic
field. More precisely, the unknown terms $u : \mathbb{R}^3
\to\mathbb{R}$ and $\phi : \mathbb{R}^3 \to \mathbb{R}$ are the
fields associated to the particle and the electric potential,
respectively. Here and in the sequel, the quantities $m$, $e$,
$\omega$ and $\hbar$ are the mass, charge, phase, and Planck's
constant, respectively, while $f:\mathbb R^3\times \mathbb R\to
\mathbb R$ is a Carath\'eodory function verifying some growth
conditions.  In fact, system (\ref{eredeti}) comes from the
evolutionary nonlinear Schr\"odinger equation by using a
Lyapunov-Schmidt reduction.

 The Schr\"odinger-Maxwell system (or its
variants) has been the object of various investigations in the last
two decades. Without sake of completeness, we recall in the sequel
some important contributions to the study of system (\ref{eredeti}).
Benci and Fortunato \cite{BF1} considered the case of
$f(x,s)=|s|^{p-2}s$ with $p\in (4,6)$ by proving the existence of
infinitely many radial solutions for (\ref{eredeti}); their main
step relies on the reduction of system (\ref{eredeti}) to the
investigation of critical points of a "one-variable" energy
functional associated with (\ref{eredeti}). Based on the idea of
Benci and Fortunato, under various growth assumptions on $f$ further
existence/multiplicity results can be found in Ambrosetti and Ruiz
\cite{Ambrosetti-Ruiz}, Azzolini
 \cite{azzolini1}, Azzollini, d'Avenia and Pomponio \cite{azzolini2}, d'Avenia \cite{davenia},
 d'Aprile and Mugnai \cite{mugnai}, Cerami and Vaira \cite{cerami}, Krist\'aly and Repovs \cite{kristalyrepovs}, Ruiz
\cite{ruiz}, Sun, Chen and Nieto \cite{SCN}, Wang and Zhou
\cite{wang}, Zhao and Zhao \cite{Zhao-Zhao}, and references therein.
By means of a Pohozaev-type identity, d'Aprile and Mugnai
\cite{mugnai2} proved the non-existence of non-trivial solutions to
system (\ref{eredeti}) whenever $f\equiv 0$ or $f(x,s)=|s|^{p-2}s$
and $p\in (0,2]\cup [6,\infty)$.

In recent years considerable efforts have been done to describe
various nonlinear phenomena in {\it curves spaces} (which are mainly
understood in linear structures), e.g.  optimal mass transportation
on metric measure spaces, geometric functional inequalities and
optimization problems on Riemannian/Finsler manifolds, etc. In
particular, this research stream  reached as well the study of
Schr\"odinger-Maxwell systems. Indeed, in the last five years
Schr\"odinger-Maxwell systems has been studied on $n-$dimensional
{\it compact Rieman\-nian mani\-folds} ($2\leq n \leq 5$) by Druet
and Hebey \cite{DH}, Hebey and Wei \cite{HW}, Ghimenti and
Micheletti \cite{GHM, GHM2} and Thizy \cite{Thizy-1, Thizy-2}. More
precisely, in the aforementioned papers various forms of the system
\begin{equation}\label{eredeti-2}
\ \left\{
\begin{array}{lll}
-\frac{\hbar^2}{2m}\Delta u+\omega u+e u\phi=f(u) & \mbox{in} &
M , \\
-\Delta_g\phi+\phi=4\pi e u^2 & \mbox{in} & M,
\end{array}%
\right.
\end{equation}
has been considered, where $(M,g)$ is a compact Riemannian manifold
 and $\Delta_g$ is the Laplace-Beltrami
operator, by proving existence results with further qualitative
property of the solution(s). As expected, the compactness  of
$(M,g)$ played a crucial role in these investigations.

As far as we know, no result is available in the literature
concerning Maxwell-Schr\"odinger systems on {\it non-compact
Riemannian manifolds}. Motivated by this fact, the purpose of the
present paper is to provide existence, uniqueness and multiplicity
results in the case of the Maxwell-Schr\"odinger system  in such a
non-compact setting. Since this problem is very general, we shall
restrict our study to {\it Hadamard manifolds} (simply connected,
complete Riemannian manifolds with non-positive sectional
curvature). Although any Hadamard manifold $(M,g)$ is diffeomorphic
to $\mathbb R^n$, $n={\rm dim}M$ (cf. Cartan's theorem), this is a
wide class of non-compact Riemannian manifold including important
geometric objects (as Euclidean spaces, hyperbolic spaces, the space
of symmetric positive definite matrices endowed with a suitable
Killing metric), see Bridson and Haefliger \cite{BH}.

To be more precise, we shall consider the Schr\"odinger-Maxwell
system
 \begin{equation*} \ \left\{
\begin{array}{lll}
-\Delta_g u+u+e u\phi=\lambda\alpha(x)f(u) & \mbox{in} &
M , \\
-\Delta_g \phi+\phi=qu^2 & \mbox{in} & M,
\end{array}%
\right. \eqno{(\mathcal{SM}_{\lambda})}
\end{equation*}
where $(M,g)$ is an $n-$dimensional Hadamard manifold $(3\leq n\leq
5)$,  $e,q>0$ are positive numbers, $f:\mathbb R\to \mathbb R$ is a
continuous function, $\alpha:M\to \mathbb R$ is a measurable
function,  and $\lambda>0$ is a parameter. The solutions $(u,\phi)$
of $(\mathcal{SM}_{\lambda})$ are sought in the Sobolev space
$H_g^1(M)\times H_g^1(M)$. In order to handle the lack of
compactness of $(M,g)$, a Lions-type symmetrization argument will be
used, based on the action of a suitable subgroup of the group of
isometries of $(M,g)$. More precisely, we shall adapt the main
results of Skrzypczak and Tintarev \cite{S-Tintarev} to our setting
concerning Sobolev spaces in the presence of group-symmetries. By
exploring variational arguments (principle of symmetric criticality,
minimization and mountain pass arguments), we consider the following
 problems describing roughly as well the main achievements:

\begin{itemize}
  \item[A.] {\it Schr\"odinger-Maxwell systems of Poisson type}:
  $\lambda=1$ and $f\equiv 1$. We prove the existence of the unique  weak
  solution $(u,\phi)\in H_g^1(M)\times H_g^1(M)$ to $(\mathcal{SM}_{1})$ while if $\alpha$ has some
  radial property (formulated in terms of the isometry group), the
  unique weak solution is isometry-invariant, see Theorem \ref{uniq-elso}.
  Moreover, we prove a rigidity result which states that
a specific profile function uniquely determines the structure of the
Hadamard manifold $(M,g)$, see Theorem \ref{rigidity}.
  \item[B.] {\it Schr\"odinger-Maxwell systems involving
sublinear terms at infinity}: $f$ is sublinear at infinity. We prove
that for small values of $\lambda>0$ system
$(\mathcal{SM}_{\lambda})$ has only the trivial solution, while for
enough large $\lambda>0$ the system $(\mathcal{SM}_{\lambda})$ has
at least two distinct, non-zero, isometry-invariant weak solutions,
see Theorem \ref{theorem-sublinear}.
  \item[C.] {\it Schr\"odinger-Maxwell systems involving oscillatory
  terms}: $f$ oscillates near the origin. We prove that system $(\mathcal{SM}_{1})$ has
  infinitely many distinct, non-zero,
isometry-invariant weak solutions which converge to 0 in the
$H^1_g(M)-$norm, see Theorem \ref{oszczero}.
\end{itemize}

\noindent In the sequel, we shall formulate rigourously our main
results with some comments.

\subsection{Statement of main results}

Let $(M,g)$ be an $n-$dimensional Hadamard manifold, $3\leq n \leq
6$. The pair $(u,\phi)\in H_g^1(M)\times H^1_g(M)$ is a {\it  weak
solution} to the system $(\mathcal{SM}_\lambda)$ if
\begin{equation}\label{schrodingerweak}
\int_M (\langle\nabla_g u , \nabla_g v\rangle+uv+e u \phi v )\d=
\lambda\int_M \alpha(x)f(u)v\d \hbox{ for all } v\in H^1_g(M),
\end{equation}
\begin{equation}\label{maxwellweak}
\int_M (\langle\nabla_g \phi, \nabla_g \psi\rangle +\phi \psi
)\d=q\int_M u^2 \psi\d \hbox{ for all }\psi \in H^1_g(M).
\end{equation}
For later use, we denote by ${\rm Isom}_g(M)$ the group of
isometries of $(M,g)$ and let $G$ be a subgroup of ${\rm
Isom}_g(M)$. A function $u:M\to \mathbb R$ is $G-${\it invariant} if
$u(\sigma(x))=u(x)$ for every $x\in M$ and $\sigma\in G$.
Furthermore, $u:M\to \mathbb R$ is \textit{radially symmetric
w.r.t.} $x_0\in M$ if $u$ depends on $d_g(x_0,\cdot)$, $d_g$ being
the Riemannian distance function. The {\it fixed point set of}  $G$
on $M$ is given by ${\rm
    Fix}_M(G)=\{x\in M:\sigma (x)=x\ {\rm for\ all}\ \sigma\in G\}.$ For a given $x_0\in M$, we introduce the following hypothesis which will be crucial in our investigations:\\

 \noindent    ${\boldsymbol{\ds{(H_G^{x_0})}}}$ {\it The group $G$ is a compact connected subgroup of ${\rm
        Isom}_g(M)$ such that ${\rm Fix}_M(G)=\{x_0\}$.}\\

\begin{remark}\label{fontosrm}\rm
In the sequel, we provide some concrete Hadamard manifolds and group
of isometries for which hypothesis ${\boldsymbol{\ds{(H_G^{x_0})}}}$
is satisfied:
\begin{itemize}
  \item {\it Euclidean spaces.} If $(M,g)=(\mathbb R^n,g_{\rm euc})$ is the usual Euclidean space, then $x_0=0$ and
$G={SO}(n_1)\times...\times {SO}(n_l)$ with $n_j\geq 2$, $j=1,...,l$
and $n_1+...+n_l=n$,
 satisfy ${\boldsymbol{\ds{(H_G^{x_0})}}}$, where
  $SO(k)$ is the special orthogonal group in dimension $k$.   Indeed, we have ${\rm
        Fix}_{\mathbb R^n}(G)=\{0\}$.
  \item {\it Hyperbolic spaces.} Let us consider the
    Poincar\'e ball model $\mathbb H^n=\{x\in \mathbb R^n:|x|<1\}$
    endowed with the Riemannian metric $\ds g_{\rm
        hyp}(x)=(g_{ij}(x))_{i,j={1,...,n}}=\frac{4}{(1-|x|^2)^2}\delta_{ij}$.
    It is well known  that $(\mathbb H^n,g_{\rm hyp})$ is a homogeneous Hadamard manifold with constant sectional curvature $-1$. Hypothesis ${\boldsymbol{\ds{(H_G^{x_0})}}}$ is verified with the same choices as above.

  \item {\rm {\it Symmetric positive definite matrices.}}
\rm Let  ${\rm Sym}(n,\mathbb R)$ be the set of symmetric $n\times
n$ matrices with real values, ${\rm P}(n,\mathbb R)\subset {\rm
Sym}(n,\mathbb R)$ be the cone of symmetric positive definite
matrices, and ${\rm P}(n,\mathbb R)_1$ be the subspace of matrices
in ${\rm P}(n,\mathbb R)$ with determinant one.
 The set ${\rm P}(n,\mathbb R)$ is endowed with the scalar product
$$%\begin{equation}\label{hadam-metrika}
 \ \ \ \ \ \ \ \ \ \ \  \langle U,V \rangle_X={\rm Tr}(X^{-1}VX^{-1}U)\ \ {\rm for\ all}\ \ X\in
{\rm P}(n,\mathbb R),\ U,V\in T_X({\rm P}(n,\mathbb R))\simeq {\rm Sym}(n,\mathbb R), $$%\end{equation}
where ${\rm Tr}(Y)$ denotes the trace of $Y\in {\rm Sym}(n,\mathbb
R)$. One can prove that $({\rm P}(n,\mathbb R)_1,\langle\cdot,\cdot
\rangle)$ is a homogeneous Hadamard manifold (with non-constant
sectional curvature) and the special linear group $SL(n)$ leaves
${\rm P}(n,\mathbb R)_1$ invariant and acts transitively on it.
Moreover, for every $\sigma\in {SL}(n )$, the map
 $[\sigma]:{\rm P}(n,\mathbb
R)_1\to {\rm P}(n,\mathbb R)_1$ defined   by $[\sigma](X)= \sigma
X\sigma^t$, is an isometry, where $\sigma^t$ denotes the
 transpose of $\sigma.$ %The  metric function on ${\rm P}(n,\mathbb
%R)_1$ is given by
%$%%\begin{equation}\label{dh-metrika}
 %   d_H^2(X,Y)={\rm Tr}(\log^2(X^{-1/2}Y
%X^{-1/2})),$
%%\end{equation}
%see Lang \cite{Lang}.
If  $G={SO}(n)$, we can prove that ${\rm Fix}_{{\rm P}(n,\mathbb
R)_1}(G)=\{I_n\}$, where $I_n$ is the identity
 matrix; for more details, see  Krist\'aly \cite{kristaly}.
\end{itemize}
\end{remark}

\noindent For $x_0\in M$ fixed, we also introduce the hypothesis\\

 \noindent    ${\boldsymbol{\ds{(\alpha^{x_0})}}}$ {\it The function  $\alpha:M\to \mathbb R$ is non-zero, non-negative and radially symmetric  w.r.t. $x_0$}.\\

\noindent Our results are divided into three classes:\\

\noindent\textbf{ A. Schr\"odinger-Maxwell systems of Poisson type.}
Dealing with a Poisson-type system, we set  $\lambda=1$ and $f\equiv
1$ in $(\mathcal{SM}_\lambda)$. For abbreviation, we simply denote
$(\mathcal{SM}_1)$ by $(\mathcal{SM})$.
    \begin{theorem}\label{uniq-elso} Let $(M,g)$ be an $n-$dimensional  homogeneous  Hadamard
    manifold $(3\leq n\leq 6)$, and $\alpha\in L^2(M)$  be a non-negative function.
    Then there exists a unique, non-negative  weak solution $(u_0,\phi_0)\in H_g^1(M)\times
H^1_g(M)$ to problem  $(\mathcal{SM})$. Moreover, if
    $x_0\in M$ is fixed and $\alpha$ satisfies ${\boldsymbol{\ds{(\alpha^{x_0})}}}$, then  $(u_0,\phi_0)$ is $G-$invariant w.r.t.
any group $G\subset {\rm Isom}_g(M)$ which satisfies
${\boldsymbol{\ds{(H_G^{x_0})}}}$.
    \end{theorem}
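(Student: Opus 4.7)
The plan is to apply the Benci--Fortunato reduction: for each $u\in H_g^1(M)$ one first solves the Maxwell equation uniquely for $\phi=\phi_u\in H_g^1(M)$ via Lax--Milgram, then substitutes into the Schr\"odinger equation. For the reduction step, the bilinear form $(\phi,\psi)\mapsto\int_M(\langle\nabla_g\phi,\nabla_g\psi\rangle+\phi\psi)\d$ is continuous and coercive on $H_g^1(M)$; homogeneity of $(M,g)$ provides bounded geometry, hence the Sobolev embedding $H_g^1(M)\hookrightarrow L^{2n/(n-2)}(M)$, which for $3\le n\le 6$ makes $\psi\mapsto q\int_M u^2\psi\d$ a continuous linear form on $H_g^1(M)$. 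Lax--Milgram then produces $\phi_u$ uniquely; Stampacchia truncation gives $\phi_u\ge 0$, and $\phi_{|u|}=\phi_u$ since the Maxwell equation depends only on $u^2$; the map $u\mapsto\phi_u$ is of class $C^1$.

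Substituting $\phi_u$ into the Schr\"odinger equation, the problem becomes that of finding critical points of
\[
\mathcal{E}(u):=\tfrac{1}{2}\int_M(|\nabla_g u|^2+u^2)\,\d+\tfrac{e}{4}\int_M\phi_u u^2\,\d-\int_M\alpha(x)u\,\d,
\]
the factor $1/4$ being dictated by the identity $\tfrac{d}{du}\bigl[\tfrac{1}{4}\int\phi_u u^2\d\bigr](v)=\int\phi_u uv\,\d$, proved by testing the Maxwell equation for the Gateaux derivative of $u\mapsto\phi_u$ against $\phi_u$ itself. Since $\phi_u\ge 0$ and $\alpha\in L^2(M)$, H\"older's inequality yields $\mathcal{E}(u)\ge\tfrac{1}{2}\|u\|^2_{H_g^1}-\|\alpha\|_{L^2}\|u\|_{L^2}$, so $\mathcal{E}$ is coercive and bounded below on $H_g^1(M)$.

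I would then establish strict convexity of $\mathcal{E}$: the $H^1$-quadratic part is strictly convex, the term $u\mapsto\int_M\alpha u\,\d$ is linear, and the quartic interaction $u\mapsto\tfrac{1}{4}\int\phi_u u^2\,\d=\tfrac{1}{4q}\|\phi_u\|^2_{H_g^1}$ is convex (a by-now classical fact for Schr\"odinger--Poisson-type functionals). Combined with weak lower semicontinuity, the direct method of the calculus of variations produces a unique minimizer $u_0$, and $\phi_0:=\phi_{u_0}$ is the associated Maxwell component, giving the unique weak solution of $(\mathcal{SM})$. Non-negativity follows from $\phi_{|u_0|}=\phi_{u_0}$ and $\alpha\ge 0$, which give $\mathcal{E}(|u_0|)\le\mathcal{E}(u_0)$ and hence $u_0\ge 0$ by uniqueness; then $\phi_0\ge 0$ by the weak maximum principle applied to the Maxwell equation.

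For the $G$-invariance assertion, under $(H_G^{x_0})$ each $\sigma\in G$ is an isometry with $\sigma(x_0)=x_0$, so by $(\alpha^{x_0})$ one has $\alpha(\sigma(x))=\alpha(d_g(x_0,\sigma(x)))=\alpha(d_g(x_0,x))=\alpha(x)$. Consequently $(u_0\circ\sigma,\phi_0\circ\sigma)$ is again a weak solution, and uniqueness forces $u_0\circ\sigma=u_0$ and $\phi_0\circ\sigma=\phi_0$ for every $\sigma\in G$. The main obstacle is verifying the convexity of the nonlocal quartic term $u\mapsto\int_M\phi_u u^2\d$ on the non-compact space $M$: this is where homogeneity of $(M,g)$ enters essentially, providing uniform Sobolev estimates through bounded geometry, and the dimensional range $3\le n\le 6$ is exactly what secures integrability of $\phi_u u^2$ via the Sobolev conjugate exponent $2n/(n-2)\ge 3$.
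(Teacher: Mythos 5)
Your proposal is correct and follows the same backbone as the paper's argument: the Benci--Fortunato reduction to the one-variable functional $\mathcal E$, strict convexity of $\mathcal E$ (strict convexity of $u\mapsto\frac12\|u\|_{H^1_g(M)}^2$, linearity of $u\mapsto\int_M\alpha u\,{\rm d}v_g$, convexity of the nonlocal term $u\mapsto\int_M\phi_u u^2\,{\rm d}v_g$ as in \eqref{c-tulajdonsag}), and the direct method producing a unique global minimizer, which is then the unique critical point and hence the unique weak solution. You diverge in two sub-steps, and in both your route is more elementary. For non-negativity the paper invokes its comparison principle (Lemma \ref{comparison} (i), obtained by testing $\mathscr L(u)\le\mathscr L(v)$ against $(u-v)_+$ together with the monotonicity \eqref{d-tulajdonsag}), whereas you note $\phi_{|u_0|}=\phi_{u_0}$ and $\alpha\ge0$ give $\mathcal E(|u_0|)\le\mathcal E(u_0)$, so $u_0=|u_0|$ by uniqueness of the minimizer; both work. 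For $G$-invariance the paper restricts $\mathcal E$ to $H^1_{g,G}(M)$ and passes back through Palais' principle of symmetric criticality (its Step 3), while you transport the solution by an isometry $\sigma\in G$ fixing $x_0$, observe that $(u_0\circ\sigma,\phi_0\circ\sigma)$ is again a weak solution because $\alpha$ and $\Delta_g$ commute with $\sigma$, and conclude by uniqueness. Your argument is shorter and avoids symmetric criticality entirely; it succeeds precisely because the solution is unique, whereas the paper's machinery is what is actually needed in Theorems \ref{theorem-sublinear} and \ref{oszczero}, where uniqueness fails. One side remark in your last paragraph is off: the convexity of $u\mapsto\int_M\phi_u u^2\,{\rm d}v_g$ does not hinge on homogeneity of $(M,g)$ --- it follows from the variational characterization $\|\phi_u\|_{H^1_g(M)}^2=\sup_{\psi\ge0}\left(2q\int_M u^2\psi\,{\rm d}v_g-\|\psi\|_{H^1_g(M)}^2\right)$, a supremum of functions convex in $u$; homogeneity is needed only for the compact embedding of Proposition \ref{Tintarev}, which this theorem does not use. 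This inaccuracy does not affect the validity of your proof.
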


\begin{remark}\rm Let $(M,g)$ be either the $n-$dimensional Euclidean space $(\mathbb R^n,g_{\rm euc})$ or
hyperbolic space $(\mathbb H^n,g_{\rm
        hyp})$, and fix $G={SO}(n_1)\times...\times {SO}(n_l)$ for a  splitting of $n=n_1+...+n_l$ with $n_j\geq 2$,
        $j=1,...,l$. If $\alpha$ is radially symmetric (w.r.t. $x_0=0$),  Theorem \ref{uniq-elso} states that the unique
        solution $(u_0,\phi_0)$ to the Poisson-type Schr\"odinger-Maxwell system
        $(\mathcal{SM})$ is not only invariant w.r.t. the group $G$
        but also with any compact connected subgroup $\tilde G$ of ${\rm Isom}_g(M)$ with the same fixed point property ${\rm Fix}_M(\tilde G)=\{0\}$;
        thus, in particular, $(u_0,\phi_0)$ is invariant w.r.t. the whole group
        $SO(n)$, i.e. $(u_0,\phi_0)$ is radially
        symmetric.
\end{remark}

\noindent For every $c\leq 0$, let ${\bf s}_{c},{\bf
ct}_{c}:[0,\infty)\to \mathbb
 R$ be defined by
\begin{equation}\label{kell}
    {\bf s}_{c}(r)=\left\{
    \begin{array}{lll}
    r
    & \hbox{if} &  {c}=0, \\
    \frac{\sinh(\sqrt{-c}r)}{\sqrt{-c}} & \hbox{if} & {c}<0,
    \end{array}\right.\ \ \ {\rm and}\ \ \ {\bf ct}_{c}(r)=\left\{
  \begin{array}{lll}
    \frac{1}{r}
    & \hbox{if} &  {c}=0, \\

  \sqrt{-c}\coth(\sqrt{-c}r) & \hbox{if} & {c}<0.
  \end{array}\right.
\end{equation} For $c\leq 0$  and  $3\leq n\leq 6$ we consider  the ordinary differential equations system
    \begin{equation*}
    \left\{
    \begin{array}
    [l]{l}%
    -h_{1}^{\prime\prime}(r)-(n-1)\boldsymbol{\rm ct_{c}}(s%
    )h_{1}^{\prime}(r)+h_{1}(r)+e h_{1}(r)h_{2}(r)-\alpha_0(r)=0,\ r\geq 0;\\
    -h_{2}^{\prime\prime}(r)-(n-1)\boldsymbol{\rm ct_{c}}(r%
    )h_{2}^{\prime}(r)+h_{2}(r)-q h_{1}(r)^{2}=0,\ r\geq 0;  \\
    \displaystyle\int_0^\infty (h_1'(r)^2+h_1^2(r)){\bf s}_{c}(r)^{n-1}{\rm d}r<\infty; \\ \displaystyle\int_0^\infty (h_2'(r)^2+h_2^2(r)){\bf s}_{c}(r)^{n-1}{\rm
    d}r<\infty,
    \end{array}
    \right.  \eqno{(\mathscr{R})}%
    \end{equation*}
   where $\alpha_0:[0,\infty)\to [0,\infty)$ satisfies the
   integrability condition $\alpha_0\in L^2([0,\infty),{\bf
   s}_{c}(r)^{n-1}{\rm d}r)$.

 We shall show (see Lemma \ref{uniqesolODE}) that $(\mathscr{R})$ has a unique,
    non-negative   solution   $(h_1^{c},h_2^{c})\in
    C^\infty(0,\infty)\times C^\infty(0,\infty)$. In fact, the following rigidity result can be stated:
    \begin{theorem}\label{rigidity}Let $(M,g)$ be an $n-$dimensional homogeneous Hadamard manifold  $(3\leq n\leq 6)$ with sectional curvature $\mathbf{K}\leq c\leq 0$. Let  $x_0\in M$ be fixed, and $G\subset {\rm Isom}_g(M)$ and
    $\alpha\in L^2(M)$  be
      such that hypotheses   ${\boldsymbol{\ds{(H_G^{x_0})}}}$ and ${\boldsymbol{\ds{(\alpha^{x_0})}}}$ are satisfied. If  $\alpha^{-1}(t)\subset M$ has null Riemannian measure for every $t\geq 0$,
      then the following statements are equivalent:
    \begin{itemize}
      \item[{\rm (i)}] $(h_1 ^{c}(d_{g}(x_{0},\cdot)),h_2 ^{c}(d_{g}(x_{0},\cdot))$ is the unique pointwise  solution of $({\mathcal{SM})};$
      \item[{\rm (ii)}] $(M,g)$ is isometric to the space form with constant sectional curvature ${\bf
      K}=c$. %$($i.e., either the hyperbolic space with sectional curvature $c<0$ or the Euclidean
     % space if $c=0$$)$.
    \end{itemize}

    \end{theorem}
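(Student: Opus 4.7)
The plan is to establish the equivalence via two separate implications: (ii) $\Rightarrow$ (i) will follow from Theorem \ref{uniq-elso} and a direct computation in geodesic polar coordinates, while (i) $\Rightarrow$ (ii) will rest on the rigidity case of the Laplacian comparison theorem.

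For (ii) $\Rightarrow$ (i), I would start from the assumption that $(M,g)$ is the simply connected space form of sectional curvature $c$. Theorem \ref{uniq-elso} produces a unique weak solution $(u_0,\phi_0)\in H^1_g(M)\times H^1_g(M)$ of $(\mathcal{SM})$, and it is $G$-invariant; since ${\rm Fix}_M(G)=\{x_0\}$ and on a space form every $G$-invariant function is radial with respect to $x_0$, one may write $u_0(x)=h_1(r)$ and $\phi_0(x)=h_2(r)$ with $r=d_g(x_0,x)$. On the $c$-space form the Laplace-Beltrami operator applied to a radial function equals $h''(r)+(n-1){\bf ct}_c(r)h'(r)$ and the volume element factors as ${\bf s}_c(r)^{n-1}{\rm d}r\,{\rm d}\sigma$, so rewriting $(\mathcal{SM})$ radially produces precisely the system $(\mathscr{R})$ for $(h_1,h_2)$; Lemma \ref{uniqesolODE} then identifies $(h_1,h_2)=(h_1^c,h_2^c)$.

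For (i) $\Rightarrow$ (ii), I would suppose $u_c(x):=h_1^c(d_g(x_0,x))$ and $\phi_c(x):=h_2^c(d_g(x_0,x))$ solve $(\mathcal{SM})$ pointwise on $M$. Since $(M,g)$ is Hadamard, $r=d_g(x_0,\cdot)$ is smooth on $M\setminus\{x_0\}$, and the chain rule yields $\Delta_g u_c=(h_1^c)''(r)+(h_1^c)'(r)\Delta_g r$. Inserting this in the first equation of $(\mathcal{SM})$, using $\alpha(x)=\alpha_0(r)$, and subtracting the first line of $(\mathscr{R})$ leaves the pointwise identity
\[
(h_1^c)'(r)\bigl[\Delta_g r-(n-1){\bf ct}_c(r)\bigr]=0 \qquad \text{on } M\setminus\{x_0\}.
\]
Under ${\bf K}\leq c$ the Laplacian comparison theorem gives $\Delta_g r\geq (n-1){\bf ct}_c(r)$, so wherever $(h_1^c)'(r)\neq 0$ one must have the equality $\Delta_g d_g(x_0,x)=(n-1){\bf ct}_c(d_g(x_0,x))$.

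The heart of the argument is then to verify that $(h_1^c)'$ does not vanish identically on any subinterval of $(0,\infty)$. Arguing by contradiction, suppose $h_1^c\equiv\kappa$ on some $(a,b)$: the first line of $(\mathscr{R})$ forces $\alpha_0(r)=\kappa+e\kappa h_2^c(r)$ there. If $\kappa=0$ this yields $\alpha_0\equiv 0$ on $(a,b)$, so $\alpha^{-1}(0)$ contains a geodesic annulus of positive Riemannian measure, contradicting the level-set hypothesis. If $\kappa\neq 0$, the second equation of $(\mathscr{R})$ becomes a linear equation for $h_2^c$ with analytic solutions on $(a,b)$, and combining this with the integrability conditions in $(\mathscr{R})$ and a unique-continuation argument again produces a contradiction with the hypothesis that $\alpha^{-1}(t)$ has null Riemannian measure for every $t\geq 0$. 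This dense non-vanishing, together with continuity, extends $\Delta_g r=(n-1){\bf ct}_c(r)$ to the whole of $M\setminus\{x_0\}$, which is the equality case of the Laplacian comparison inequality at every point; the classical rigidity statement then forces all radial sectional curvatures along geodesics emanating from $x_0$ to equal $c$, and the homogeneity of $(M,g)$ promotes this to constant sectional curvature $c$ throughout, giving (ii). The main obstacle is precisely this dense non-vanishing of $(h_1^c)'$, where the hypothesis on the level sets of $\alpha$ is used in an essential way.
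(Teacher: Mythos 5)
Your direction (ii)$\Rightarrow$(i) and the overall skeleton of (i)$\Rightarrow$(ii) — subtract the radial ODE system $(\mathscr{R})$ from the pointwise equations, obtain $h'(r)\bigl[\Delta_g d_g(x_0,\cdot)-(n-1)\mathbf{ct}_c(d_g(x_0,\cdot))\bigr]=0$, and invoke the equality case of the Laplace comparison — match the paper. But there is a genuine gap where you yourself locate "the main obstacle": you derive the identity from the \emph{first} (Schr\"odinger) equation, so the function whose derivative must be shown not to vanish on an interval is $h_1^c$, and your case $h_1^c\equiv\kappa\neq 0$ on $(a,b)$ does not close. There the first line of $(\mathscr{R})$ only gives $\alpha_0=\kappa(1+e\,h_2^c)$ on $(a,b)$, while the second line becomes a linear second-order ODE for $h_2^c$ whose solutions are generically non-constant; a non-constant (say strictly monotone) $h_2^c$ makes $\alpha_0$ non-constant on $(a,b)$, in which case every level set $\alpha^{-1}(t)$ meets the annulus in a null set and the hypothesis on $\alpha$ yields \emph{no} contradiction. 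The appeal to "a unique-continuation argument" does not supply one: nothing forces $h_2^c$ to be constant on a subinterval, so this branch is simply not ruled out by your argument.

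The fix is to run the same computation on the \emph{second} (Maxwell) equation instead, which is what the paper does: one gets $(h_2^c)'(r)\bigl[\Delta_g d_g(x_0,\cdot)-(n-1)\mathbf{ct}_c(d_g(x_0,\cdot))\bigr]=0$, and if $(h_2^c)'$ vanished on an interval, so that $h_2^c\equiv c_0$ there, the second line of $(\mathscr{R})$ \emph{algebraically} forces $h_1^c\equiv\sqrt{c_0/q}$ on that interval, and then the first line forces $\alpha_0\equiv\sqrt{c_0/q}(1+ec_0)$ — a genuine contradiction with the level-set hypothesis. (Equivalently, you could use both equations: on the open set where the comparison is strict, both $(h_1^c)'$ and $(h_2^c)'$ vanish, and constancy of both profiles again makes $\alpha_0$ constant.) Your concluding step is a legitimate alternative to the paper's: you pass from $\Delta_g d_g(x_0,\cdot)=(n-1)\mathbf{ct}_c$ to rigidity via the Hessian/Riccati equality case and homogeneity, whereas the paper integrates the identity to get $\mathrm{Vol}_g(B_g(x_0,\tau))=V_{c,n}(\tau)$ and invokes the equality case of Bishop--Gromov; both endings work, but the $\kappa\neq 0$ branch must be repaired as above before either applies.
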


\noindent   \textbf{ B.  Schr\"odinger-Maxwell systems involving
sublinear terms at infinity.} In this part, we focus our attention
to Schr\"odinger-Maxwell systems involving sublinear nonlinearities.
To state our  result we consider a continuous function
$f:[0,\infty)\to \mathbb R$ which verifies the following
assumptions:
    \begin{itemize}
        \item[$(f_1)$] \label{f1-felt} $\frac{f(s)}{s}\to 0$ as $s\to 0^+$;
        \item[$(f_2)$] \label{f2-felt}   $\frac{f(s)}{s}\to 0$ as $s\to  \infty$;
        \item[$(f_3)$] \label{f3-felt} $F(s_0)>0$ for some $s_0>0$, where $\ds F(s)=\int_0^s f(t){\rm
        d}t,$ $s\geq  0.$
    \end{itemize}
\begin{remark} \rm \label{remark-zero-extension}

(a) Due to $(f_1)$, it is clear that $f(0)=0$, thus we can extend
continuously the function $f:[0,\infty)\to \mathbb R$ to
        the whole $\mathbb R$ by $f(s)=0$ for $s\leq 0;$ thus,  $F(s)=0$ for
        $s\leq 0$.

(b)  $(f_1)$ and $(f_2)$ mean that $f$ is superlinear at the origin
        and sublinear at infinity, respectively. The function
        $f(s)=\ln(1+s^2)$, $s\geq 0,$ verifies hypotheses $(f_1)-(f_3)$.

\end{remark}

    \begin{theorem}\label{theorem-sublinear}
Let $(M,g)$ be an $n-$dimensional homogeneous  Hadamard manifold
$(3\leq n\leq 5)$, $x_0\in M$ be fixed, and $G\subset {\rm
Isom}_g(M)$ and
    $\alpha\in L^1(M)\cap
L^\infty(M)$  be
      such that hypotheses   ${\boldsymbol{\ds{(H_G^{x_0})}}}$ and ${\boldsymbol{\ds{(\alpha^{x_0})}}}$ are satisfied.
 If the continuous function
$f:[0,\infty)\to \mathbb R$ satisfies
 assumptions $(f_1)-(f_3)$, then
        \begin{itemize}
            \item[{\rm (i)}]\label{elso-tetel-a} there exists  $\widetilde{\lambda}_0>0$ such that if $0<\lambda<\widetilde{\lambda}_0$, system  $(\mathcal{SM}_{\lambda})$ has only the
            trivial solution$;$
            \item[{\rm (ii)}]\label{elso-tetel-b}there exists  $\lambda_0>0$ such that for every $\lambda\geq \lambda_0$, system $(\mathcal{SM}_{\lambda})$ has at least two distinct non-zero,
            non-negative
            $G-$invariant weak solutions in $H_g^1(M)\times H^1_g(M)$.
        \end{itemize}
    \end{theorem}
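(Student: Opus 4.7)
The plan is to apply the classical Benci--Fortunato reduction. For every $u\in H_g^1(M)$, the linear Maxwell equation $-\Delta_g\phi+\phi=qu^2$ admits, by Lax--Milgram, a unique weak solution $\phi_u\in H_g^1(M)$; testing with $\phi_u^-=\min(\phi_u,0)$ gives $\phi_u\geq 0$, and the assignment $u\mapsto\phi_u$ is of class $C^1$ into $H_g^1(M)$. Consequently, $(u,\phi_u)$ is a weak solution of $(\mathcal{SM}_\lambda)$ if and only if $u$ is a critical point of
$\mathcal{E}_\lambda(u)=\frac{1}{2}\|u\|_{H_g^1(M)}^2+\frac{e}{4}\int_M\phi_u u^2\,\d-\lambda\int_M\alpha(x)F(u)\,\d.$
To compensate for the non-compactness of $M$, I restrict $\mathcal{E}_\lambda$ to the $G$-invariant subspace $H_{g,G}^1(M)$; since $G$ acts by isometries and $\alpha$ is $G$-invariant (being radial about the unique fixed point $x_0$), the functional is $G$-invariant, and Palais's principle of symmetric criticality---applicable because $G$ is compact and connected by ${\boldsymbol{\ds{(H_G^{x_0})}}}$---ensures that critical points on $H_{g,G}^1(M)$ are weak solutions of $(\mathcal{SM}_\lambda)$. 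The crucial compactness input is then the Skrzypczak--Tintarev theorem, which yields $H_{g,G}^1(M)\hookrightarrow L^p(M)$ compactly for every $p\in(2,2^*)$ (since $n\leq 5$, so $2^*>2$).

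For part (i), let $(u,\phi)$ be any weak solution and test the Schr\"odinger equation with $v=u$. Since $\phi=\phi_u\geq 0$ and $e>0$, the coupling term is non-negative and may be dropped to obtain $\|u\|_{H_g^1(M)}^2\leq\lambda\int_M\alpha(x)f(u)u\,\d.$ Assumptions $(f_1)$ and $(f_2)$ (with $f$ extended to zero on $(-\infty,0]$) imply that $s\mapsto f(s)/s$, with value $0$ at $s=0$, is continuous on $\mathbb{R}$ and vanishes at infinity, hence is bounded by some $K>0$. Therefore $\|u\|_{H_g^1(M)}^2\leq\lambda K\|\alpha\|_{L^\infty(M)}\|u\|_{L^2(M)}^2\leq\lambda K\|\alpha\|_{L^\infty(M)}\|u\|_{H_g^1(M)}^2$, so choosing $\widetilde\lambda_0=(K\|\alpha\|_{L^\infty(M)})^{-1}$ forces $u\equiv 0$ for every $0<\lambda<\widetilde\lambda_0$, whence $\phi\equiv 0$.

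For part (ii), the bound $|F(s)|\leq\frac{K}{2}s^2$ combined with $\alpha\in L^1(M)\cap L^\infty(M)$ shows that $\mathcal{E}_\lambda$ is coercive on $H_{g,G}^1(M)$; the compact embedding above makes $u\mapsto\int_M\alpha F(u)\,\d$ sequentially weakly continuous, while $u\mapsto\int_M\phi_u u^2\,\d$ is sequentially weakly lower semicontinuous, so a global minimizer $u_1\in H_{g,G}^1(M)$ exists. To verify $u_1\neq 0$ for $\lambda$ large, I construct a compactly supported test function $w$ depending only on $d_g(x_0,\cdot)$ (hence automatically $G$-invariant) equal to the value $s_0$ from $(f_3)$ on an annulus around $x_0$; then $\int_M\alpha(x)F(w)\,\d>0$, so $\mathcal{E}_\lambda(w)$ becomes strictly negative once $\lambda\geq\lambda_0$ for a suitable threshold, forcing $\mathcal{E}_\lambda(u_1)<0=\mathcal{E}_\lambda(0)$. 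For the second solution, $(f_1)$ yields $|F(s)|\leq\varepsilon s^2$ near $s=0$, so for sufficiently small $\rho>0$ one has $\mathcal{E}_\lambda(u)\geq(\tfrac{1}{2}-\lambda\varepsilon\|\alpha\|_{L^\infty(M)})\|u\|_{H_g^1(M)}^2>0$ whenever $\|u\|_{H_g^1(M)}=\rho$. Combined with $\mathcal{E}_\lambda(u_1)<0$, the mountain-pass geometry between $0$ and $u_1$ is satisfied; coercivity plus the compact embedding yield the Palais--Smale condition, and the Mountain Pass Theorem produces a second critical point $u_2$ at a level $c>0>\mathcal{E}_\lambda(u_1)$, distinct from both $0$ and $u_1$. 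Finally, the non-negativity of $u_1$ and $u_2$ follows by testing the Schr\"odinger equation with $u^-=\min(u,0)$: since $f$ vanishes on $(-\infty,0]$ by Remark \ref{remark-zero-extension}(a), the right-hand side is zero and the resulting inequality forces $u^-\equiv 0$.

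The principal obstacle is clearly the non-compactness of $M$, which would otherwise destroy both the attainment of the infimum and the Palais--Smale condition; this is precisely what the symmetrization together with the Skrzypczak--Tintarev compact embedding are designed to bypass. A subtler technical point is the handling of the non-local cubic term $\int_M\phi_u u^2\,\d$: both its weak lower semicontinuity and its behaviour along Palais--Smale sequences require careful use of the $C^1$ dependence of $\phi_u$ on $u$ together with the compact embedding $H_{g,G}^1(M)\hookrightarrow L^p(M)$ to upgrade weak to strong convergence of the coupling term.
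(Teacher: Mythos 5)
Your overall strategy coincides with the paper's: the Benci--Fortunato reduction to the one-variable functional $\mathcal E_\lambda$, restriction to $H^1_{g,G}(M)$, Palais' principle of symmetric criticality, the Skrzypczak--Tintarev compact embedding, and then minimization plus mountain pass. Part (i) is correct and is exactly the paper's argument. However, two of your growth estimates in part (ii) have genuine gaps.

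First, coercivity. You assert that the global bound $|F(s)|\leq \frac{K}{2}s^2$ (with $K$ the fixed constant bounding $f(s)/s$) together with $\alpha\in L^1(M)\cap L^\infty(M)$ gives coercivity of $\mathcal E_\lambda$. This only yields
$\mathcal E_\lambda(u)\geq \bigl(\tfrac12-\lambda\tfrac{K}{2}\|\alpha\|_{L^\infty(M)}\bigr)\|u\|^2_{H^1_g(M)}$,
whose leading coefficient is \emph{negative} precisely in the regime $\lambda\geq\lambda_0$ that part (ii) concerns (indeed $\lambda_0>\widetilde\lambda_0=(K\|\alpha\|_{L^\infty(M)})^{-1}$ by part (i)). So as written the estimate proves nothing for large $\lambda$; coercivity for \emph{every} $\lambda$ requires using the sublinearity at infinity $(f_2)$ quantitatively: for each $\eps>0$ there is $\delta>0$ with $|F(s)|\leq\eps s^2$ for $|s|>\delta$, and the region $\{|u|\leq\delta\}$ contributes at most $\|\alpha\|_{L^1(M)}\max_{|s|\leq\delta}|F(s)|$, a constant. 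Choosing $\eps<(2\lambda\|\alpha\|_{L^\infty(M)})^{-1}$ then gives coercivity. This is not cosmetic: coercivity for all $\lambda$ is also what makes Palais--Smale sequences bounded in your compactness argument. Second, the mountain-pass geometry. From ``$|F(s)|\leq\eps s^2$ near $s=0$'' you cannot conclude $\mathcal E_\lambda(u)\geq(\tfrac12-\lambda\eps\|\alpha\|_{L^\infty(M)})\|u\|^2$ on the sphere $\|u\|_{H^1_g(M)}=\rho$, because a function with small $H^1$-norm need not be pointwise small. One needs the full interpolation estimate $|f(s)|\leq\eps|s|+C_\eps|s|^{q-1}$ for all $s$, with some $q\in(2,2^*)$, whence $|\mathcal F(u)|\leq\frac{\eps}{2}\|u\|^2_{H^1_g(M)}+C'_\eps\|u\|^q_{H^1_g(M)}$; since $q>2$ the quadratic term dominates for small $\rho$ and the sphere infimum is positive. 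Both repairs are standard and the rest of your argument (nontriviality of the minimizer via a radial test function, the Palais--Smale verification via the compact embedding and the Maxwell-equation subtraction trick, non-negativity via testing with $\min(u,0)$) is sound and matches the paper.
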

\begin{remark}\rm
(a) By a three critical points result of Ricceri \cite{Ricceri} one
can prove
    that the number of solutions for system $(\mathcal{SM}_{\lambda})$ is stable under
    small nonlinear perturbations $g:[0,\infty)\to \mathbb R$ of
    subcritical type, i.e., $g(s)=o(|s|^{2^*-1})$ as $s\to \infty$,
    $2^*=\frac{2n}{n-2},$ whenever  $\lambda>\lambda_0$.

    (b) Working with sublinear nonlinearities, Theorem
\ref{theorem-sublinear} complements several results where $f$ has a
superlinear growth at infinity, e.g., $f(s)=|s|^{p-2}s$ with $p\in
(4,6)$.
\end{remark}

  %  As a counterpart of the previous theorem we consider the case when the continuous function $f:\mathbb{R}\to \mathbb{R}$ is superlinear at infinity; namely, we assume that
%    \begin{itemize}
%        \item[$(\tilde f_1)$]\label{f222-felt} there exists $p\in (4,6)$ and $C>0$ such that $|f(s)|\leq C(|s|+|s|^{p-1})$ for every $s\in \mathbb{R};$
%        \item[$(\tilde f_2)$]\label{f333-felt} there exists $\mu>4$ such that $0<\mu F(s)\leq sf(s)$ for every $s\in \mathbb{R}\setminus \{0\}.$
%    \end{itemize}
%    \begin{theorem}\label{Ricceri}Let $(M,g)$ be a $3-$dimensional homogeneous  Hadamard manifold and let $x_0\in M$ and
%     $G\subset {\rm Isom}_g(M)$ satisfying
%     ${\boldsymbol{\ds{(H_G^{x_0})}}}$. If $\alpha\equiv 1$ and the continuous function $f:\mathbb{R}\to \mathbb R$ satisfies
% hypotheses    $(\tilde{f}_1)$ and $(\tilde{f}_2)$, then there exists $\lambda_0>0$ such that for every $0<\lambda<\lambda_0$ the system $(\mathcal{SM}_{\lambda})$
% has at least two distinct $G-$invariant weak solutions in $H_g^1(M)\times
% H^1_g(M)$.
%    \end{theorem}
%    \begin{remark}\rm
% By $(\tilde f_1)$ we have that $f(0)=0$; thus we can extend $f$ as in Remark \ref{remark-zero-extension} (a).  The function $f(s)=s^4$ verifies hypotheses $(\tilde{f}_1)$ and
%$(\tilde{f}_2)$.
%\end{remark}

   \noindent \textbf{C. Schr\"odinger-Maxwell systems involving oscillatory terms.}
    Let $f:[0,\infty)\to \mathbb R$ be a continuous function with
$\ds F(s)=\int_0^s f(t)\mathrm{d}t$. We assume:
    \begin{itemize}
        \item[$(f_0^1)$]\label{f1-oszc} $\ds -\infty<\liminf_{s \to 0} \frac{F(s)}{s^2}\leq \limsup_{s\to 0}\frac{F(s)}{s^2}=+\infty$;
        \item[$(f_0^2)$]\label{f2-oszc} there exists a sequence $\{s_j\}_j \subset (0,1) $ converging to $0$ such that $f(s_j)<0$,  $j\in \N$.

    \end{itemize}
    \begin{theorem}\label{oszczero}
        Let $(M,g)$ be an $n-$dimensional homogeneous Hadamard manifold $(3\leq n\leq 5)$, $x_0\in M$ be fixed, and $G\subset {\rm Isom}_g(M)$ and
    $\alpha\in  L^1(M)\cap
L^\infty(M)$  be
      such that hypotheses   ${\boldsymbol{\ds{(H_G^{x_0})}}}$ and ${\boldsymbol{\ds{(\alpha^{x_0})}}}$ are satisfied. If $f:[0,\infty)\to \R$ is a continuous function
        satisfying $(f_0^1)$ and $(f_0^2)$, then there exists a sequence $\{(u_j^0,\phi_{u_j^0})\}_j\subset H^1_g(M)\times H^1_g(M)$ of distinct, non-negative $G-$invariant  weak solutions to $(\mathcal{SM})$ such that
        $$\lim_{j\to \infty}\|u_j^0\|_{H^1_g(M)}=\lim_{j\to \infty}\|\phi_{u_j^0}\|_{H^1_g(M)}=0.$$
    \end{theorem}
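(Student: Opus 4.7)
My strategy reduces the coupled system to a single-variable variational problem, exploits the $G$-symmetry to restore compactness, and then performs a truncation tailored to the oscillation of $f$ near the origin.

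First, I would eliminate $\phi$. For each $u\in H^1_g(M)$ the Maxwell equation $-\Delta_g\phi+\phi=qu^2$ admits, by Lax-Milgram, a unique solution $\phi_u\in H^1_g(M)$ depending smoothly on $u$; a standard maximum-principle test with $(\phi_u)^-$ gives $\phi_u\geq 0$. Substituting into \eqref{schrodingerweak} reduces the problem to finding critical points of
\[
\mathcal{E}(u)=\frac{1}{2}\int_M(|\nabla_g u|^2+u^2)\,dv_g+\frac{e}{4}\int_M u^2\phi_u\,dv_g-\int_M\alpha(x)F(u^+)\,dv_g
\]
on $H^1_g(M)$, where $F$ is extended by $0$ on $(-\infty,0]$ so that non-negativity of minimizers is automatic.

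Next I would pass to the $G$-invariant subspace $H^1_{g,G}(M)=\{u\in H^1_g(M):u\circ\sigma=u\ \text{for all}\ \sigma\in G\}$. Since $\alpha$ is $G$-invariant by $(\alpha^{x_0})$ and the map $u\mapsto\phi_u$ commutes with the isometries of $G$, the functional $\mathcal{E}$ is $G$-invariant, so Palais' principle of symmetric criticality allows us to seek critical points inside $H^1_{g,G}(M)$. On this subspace the adaptation of the Skrzypczak-Tintarev theorem carried out earlier in the paper supplies compact embeddings $H^1_{g,G}(M)\hookrightarrow L^p(M)$ for $p\in(2,2^*)$, which is a non-empty range for $3\leq n\leq 5$ and restores the compactness lost on the non-compact $(M,g)$.

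The core of the proof is a truncation argument. Using $(f_0^2)$, I would select a sequence $\theta_j\searrow 0$ with $f(\theta_j)\leq 0$ (by slightly perturbing the given $s_j$ via continuity of $f$). Define the truncated nonlinearity $f_j(s)=f(s)$ on $[0,\theta_j]$ and $f_j(s)=0$ on $(\theta_j,\infty)$, with primitive $F_j$, and let $\mathcal{E}_j$ be obtained from $\mathcal{E}$ by replacing $F$ with $F_j$. Since $F_j$ is bounded, $\mathcal{E}_j$ is coercive and sequentially weakly lower semicontinuous on $H^1_{g,G}(M)$, so it attains a global minimum $u_j\geq 0$. Applying the Euler-Lagrange identity for $\mathcal{E}_j$ to the test function $(u_j-\theta_j)^+\in H^1_{g,G}(M)$, and using $\phi_{u_j}\geq 0$ together with $f_j\equiv 0$ on $(\theta_j,\infty)$, I would force $(u_j-\theta_j)^+\equiv 0$, i.e.\ $0\leq u_j\leq\theta_j$ a.e. Since $f_j$ and $f$ coincide on $[0,\theta_j]$, $(u_j,\phi_{u_j})$ actually solves $(\mathcal{SM})$. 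To guarantee $u_j\not\equiv 0$, condition $(f_0^1)$ provides $\xi_k\to 0^+$ with $F(\xi_k)/\xi_k^2\to+\infty$; plugging a $G$-invariant bump of height $\xi_k$ supported in a small geodesic ball around $x_0$ (which is $G$-invariant because $\mathrm{Fix}_M(G)=\{x_0\}$) and exploiting $\alpha\in L^1\cap L^\infty$ shows $\inf\mathcal{E}_j<0$ for all sufficiently large $j$, hence $u_j\not\equiv 0$.

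Finally, the uniform bound $0\leq u_j\leq\theta_j\to 0$ combined with $\mathcal{E}_j(u_j)\leq 0$ and coercivity of the quadratic part forces $\|u_j\|_{H^1_g(M)}\to 0$, after which testing the Maxwell equation against $\phi_{u_j}$ itself yields $\|\phi_{u_j}\|_{H^1_g(M)}\to 0$. Because every non-trivial $u_j$ has $L^\infty$-norm below $\theta_j$ but is not identically zero, only finitely many can coincide, producing the announced infinite sequence of distinct solutions. The main obstacle I expect is the $L^\infty$ a priori bound $u_j\leq\theta_j$ in the truncation step: ensuring both that $\phi_{u_j}\geq 0$ (so that the coupling term has the right sign in the test) and that the truncation levels $\theta_j$ can genuinely be chosen with $f(\theta_j)\leq 0$ along a sequence tending to zero is the place where $(f_0^2)$ is used essentially; without this bound, the minimizers of $\mathcal{E}_j$ would solve only the truncated problem and not $(\mathcal{SM})$ itself.
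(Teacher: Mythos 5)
Your overall strategy coincides with the paper's: reduce to the one-variable functional $\mathcal{E}$, work in $H^1_{g,G}(M)$ via symmetric criticality and the Skrzypczak--Tintarev compact embedding, truncate $f$ at levels $\theta_j\to 0$ dictated by $(f_0^2)$, obtain minimizers with $0\le u_j\le\theta_j$ so that they solve the untruncated system, and use bump functions plus $(f_0^1)$ to show each minimizer is nontrivial with energy tending to $0$. Where you genuinely diverge is the step that turns a truncated minimizer into a solution. The paper minimizes $\widetilde{\mathcal E}_G$ on the convex constraint set $W^b_G=\{u:\|u\|_{L^\infty}\le b\}$, proves the minimizer lies in $[0,a]$ by a direct comparison of energies, and then must invoke nonsmooth (Szulkin-type) critical point theory together with the Kobayashi--\^Otani principle of symmetric criticality for the functional $\widetilde{\mathcal E}+\chi_{W^b}$ to pass from the variational inequality $\widetilde{\mathcal E}'(u_G)(w-u_G)\ge 0$ to the equation (this is the long Claim 2 of Proposition \ref{auxiliarylemma}). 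You instead minimize $\mathcal{E}_j$ globally on $H^1_{g,G}(M)$ (legitimate, since the truncated $F_j$ has linear growth, so $\mathcal{E}_j$ is coercive and weakly lower semicontinuous there), apply the smooth Palais principle to get $\mathcal{E}_j'(u_j)=0$ on all of $H^1_g(M)$, and only then derive the bound $u_j\le\theta_j$ a posteriori by testing with $(u_j-\theta_j)^+$: on $\{u_j>\theta_j\}$ the right-hand side is nonpositive while the left-hand side dominates $\int_{\{u_j>\theta_j\}}u_j(u_j-\theta_j)\ge 0$, forcing that set to be null. This is a correct and arguably cleaner route that avoids the Szulkin machinery entirely; your distinctness argument ($\|u_j\|_{L^\infty}\le\theta_j\to 0$ with $u_j\not\equiv 0$ forces infinitely many distinct elements) is also simpler than the paper's energy-level argument.

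Three details need repair before this is complete. First, your truncation $f_j=0$ on $(\theta_j,\infty)$ is discontinuous at $\theta_j$ whenever $f(\theta_j)\ne 0$, which threatens the $C^1$ regularity of $\mathcal{E}_j$; use $f_j(s)=f(\min(s,\theta_j))$ as in the paper --- the sign $f(\theta_j)\le 0$ from $(f_0^2)$ still makes the test-function argument go through. Second, in the nontriviality step you only invoke the $\limsup$ half of $(f_0^1)$. The bump $w$ necessarily has a transition region where $0<w<\xi_k$, and there $F(w)$ may be negative, contributing a \emph{positive} amount to $-\int_M\alpha F(w)\,{\rm d}v_g$; you need the other half of $(f_0^1)$, namely $\liminf_{s\to 0}F(s)/s^2>-\infty$, to bound this contribution by $l_0\xi_k^2\|\alpha\|_{L^1(M)}$ and then beat it by choosing $F(\xi_k)>L_0\xi_k^2$ with $L_0$ large. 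Without this the inequality $\inf\mathcal{E}_j<0$ does not follow. Third, placing the bump on ``a small geodesic ball around $x_0$'' is unsafe: $(\boldsymbol{\alpha^{x_0}})$ only says $\alpha$ is non-zero, non-negative and radial, so $\alpha$ may vanish identically near $x_0$; the bump must be supported where $\alpha$ is genuinely positive (the paper uses an annulus $A_{x_0}[r,\rho]$ with ${\rm essinf}\,\alpha>0$), and its height $\xi_k$ must be chosen $\le\theta_j$ so that $F_j$ and $F$ agree on the range of the bump. All three points are fixable within your scheme.
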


    \begin{remark}\rm (a) $(f_0^1)$ and $(f_0^2)$ imply $f(0)=0$; thus we can extend $f$ as in Remark \ref{remark-zero-extension} (a).

 (b) Under the assumptions of Theorem \ref{oszczero} we  consider  the  perturbed Schr\"odinger-Maxwell system
    \begin{equation*}
    \ \left\{
    \begin{array}{lll}
    -\Delta_g u+u+e u\phi=\lambda\alpha(x)[f(u)+\eps g(u)] & \mbox{in} &
    M , \\
    -\Delta_g \phi+\phi=qu^2 & \mbox{in} & M,
    \end{array}%
    \right. \eqno{(\mathcal{SM}_{\eps})}
    \end{equation*}
    where $\eps>0$ and $g:[0,\infty)\to \mathbb{R}$ is a continuous function with
    $g(0)=0$.
    Arguing as in the proof of Theorem \ref{oszczero}, a careful energy control provides the following statement: for every $k\in \mathbb{N}$ there exists $\eps_k>0$ such that $(\mathcal{SM}_{\eps})$ has at least $k$ distinct,
    $G-$invariant weak solutions $(u_{j,\eps},\phi_{u_{j,\eps}})$, $j\in \{1,...,k\}$, whenever $\eps \in [-\eps_k,\eps_k]$. Moreover, one can prove that
    $\|u_{j,\eps}\|_{H^1_g(M)}<\frac{1}{j}\ {\rm and}\ \|\phi_{u_{j,\eps}}\|_{H^1_g(M)}<\frac{1}{j},\ j\in \{1,...,k\}.$
    %This quantified statement is in concordance
   % with the conclusions in Theorem \ref{oszczero}.
   Note that a similar
    phenomenon has been described for Dirichlet problems in
    Krist\'aly and Moro\c sanu \cite{KM}.

    (c) Theorem \ref{oszczero} complements some results from the
    literature where  $f:\mathbb R\to \mathbb R$ has the symmetry
    property $f(s)=-f(-s)$ for every $s\in \mathbb R$ and verifies an Ambrosetti-Rabinowitz-type assumption. Indeed, in
    such cases, the symmetric version of the mountain pass theorem
    provides a sequence of weak solutions for the studied Schr\"odinger-Maxwell
    system.
    \end{remark}
\newpage

\section{Preliminaries}
\subsection{Elements from Riemannian geometry}
In the sequel, let $n\geq 3$ and $(M, g)$ be an $n-$dimensional
Hadamard manifold (i.e., $(M, g)$ is a complete, simply connected
Riemannian manifold with nonpositive sectional curvature). Let $T_x
M$ be the tangent space at $x \in M$, $\displaystyle TM =
\bigcup_{x\in M}T_xM$ be the tangent bundle, and $d_g : M \times M
\to [0, +\infty)$ be the distance function associated to the
Riemannian metric $g$. Let $B_g(x, \rho) = \{y \in M : d_g(x, y) <
\rho \}$ be the open metric ball with center $x$ and radius $\rho
> 0$. If $\d$ is the canonical volume element on $(M, g)$, the
volume of a bounded open set $S \subset M$ is $\mathrm{Vol}_g(S) =
\displaystyle\int_S \d$.
 If ${\text d}\sigma_g$ denotes
the $(n-1)-$dimensional Riemannian measure induced on $\partial S$
by $g$, $\mathrm{Area}_g(\partial S)=\displaystyle\int_{\partial S}
{\text d}\sigma_g$ denotes the area of $\partial S$ with respect to
the metric $g$.

Let $p>1.$ The norm of $L^p(M)$ is given by
$$\|u\|_{L^p(M)}=\left(\displaystyle\int_M
|u|^p\d\right)^{1/p}.$$ Let $u:M\to \mathbb R$ be a function of
class $C^1.$ If $(x^i)$ denotes the local coordinate system on a
coordinate neighbourhood of $x\in M$, and the local components of
the differential of $u$ are denoted by $u_i=\frac{\partial
u}{\partial
    x_i}$, then the local components of the gradient  $\nabla_g u$ are
$u^i=g^{ij}u_j$. Here, $g^{ij}$ are the local components of
$g^{-1}=(g_{ij})^{-1}$. In particular, for every $x_0\in M$ one has
the eikonal equation
\begin{equation}\label{dist-gradient}
|\nabla_g d_g(x_0,\cdot)|=1\ {\rm  \ on}\ M\setminus \{x_0\}.
\end{equation}
The Laplace-Beltrami operator is given by $\Delta_g u={\rm
div}(\nabla_g u)$ whose expression in a local chart of associated
coordinates $(x^i)$ is $$\Delta_g u=g^{ij}\left(\frac{\partial^2
u}{\partial x_i\partial x_j}-\Gamma_{ij}^k\frac{\partial u}{\partial
x_k}\right),$$ where $\Gamma_{ij}^k$ are the coefficients of the
Levi-Civita connection. For enough regular $f:[0,\infty)\to \mathbb
R$ one has the formula
\begin{equation}\label{Laplace-chain}
    -\Delta_g(f(d_g(x_0,x))=-f''(d_g(x_0,x))-f'(d_g(x_0,x))\Delta_g(d_g(x_0,x))
    \ {\rm for\ a.e.}\ x\in M.
\end{equation}
 When no
confusion arises, if $X,Y\in T_x M$, we simply write $|X|$ and
$\langle X,Y\rangle$ instead of the norm $|X|_x$ and inner product
$g_x(X,Y)=\langle X,Y\rangle_x$, respectively. The $L^p(M)$ norm of
$\nabla_g u(x)\in T_xM$ is given by
$$\|\nabla_g u\|_{L^p(M)}=\left(\displaystyle\int_M
|\nabla_gu|^p{\rm d}v_g\right)^\frac{1}{p}.$$  The space $H^1_g(M)$
is the completion of $C_0^\infty(M)$ w.r.t. the norm
$$\|u\|_{H^1_g(M)}=\sqrt{\|u\|_{L^2(M)}^2+\|\nabla_g u\|_{L^2(M)}^2}.$$
Since $(M,g)$ is an $n-$%
    di\-men\-sional  Hadamard manifold $(n\geq 3)$, according to Hoffman and
Spruck \cite{HS}, the embedding $H^1_g(M)\hookrightarrow L^p(M)$ is
continuous for every $p\in [2,2^*]$, where $2^*=\frac{2n}{n-2};$ see
also Hebey \cite{hebey}. Note that the embedding
$H^1_g(M)\hookrightarrow L^p(M)$ is not compact for any $p\in
[2,2^*]$.

For any $c\leq 0,$ let $\displaystyle
V_{c,n}(\rho)=n\omega_n\int_0^\rho {\bf s}_c(t)^{n-1}{\rm d}t$ be
the volume of the ball with radius $\rho>0$ in the $n-$dimensional
space form (i.e., either the hyperbolic space with sectional
curvature $c$ when $c<0$ or the Euclidean space when $c=0$), where
${\bf s}_c$ is from (\ref{kell}) and $\omega_n$ is the volume of the
unit $n-$dimensional Euclidean ball. Note that for every $x\in M$,
we have
\begin{equation}  \label{volume-comp-nullaban}
\lim_{\rho\to 0^+}\frac{\mathrm{Vol}_g(B_g(x,\rho))}{V_{c,n}(\rho)}%
=1.
\end{equation}
The notation ${\bf K}\leq {c}$ means that the sectional curvature is
bounded from above by ${c}$ at any point and  direction.

Let  $(M,g)$ be an $n-$%
    di\-men\-sional Hadamard manifold with sectional curvature ${\bf
        K}\leq c\leq 0$. Then we have (see
Shen \cite{Shen-volume} and Wu and Xin \cite[Theorems 6.1 \&
6.3]{Wu-Xin}):
\begin{itemize}
  \item {\it Bishop-Gromov volume comparison theorem}: the function $\rho\mapsto
    \frac{\mathrm{Vol}_g(B_g(x,\rho))}{V_{c,n}(\rho)}, \ \rho>0,$ is
    non-decreasing for every  $x\in M$. In particular, from
(\ref{volume-comp-nullaban}) we
    have
    \begin{equation}  \label{volume-comp-altalanos-0}
    {\mathrm{Vol}_g(B_g(x,\rho))}\geq V_{c,n}(\rho)\ {\rm for\ all}\ \rho>0.
    \end{equation}
    Moreover, if equality holds in (\ref{volume-comp-altalanos-0}) for all $x\in M$ and $\rho>0$ then $\mathbf{K}= c$.
  \item {\it Laplace comparison theorem}: $\Delta_gd_g(x_0,x)\geq (n-1){\bf
  ct}_c(d_g(x_0,x))$  for every  $x\in M\setminus \{x_0\}$.  If $\mathbf{K}= c$ then
  we have equality in the latter relation.
\end{itemize}

\subsection{Variational framework}
Let $(M,g)$ be an $n-$%
    di\-men\-sional Hadamard manifold, $3\leq n\leq 6.$ We define the energy functional $\mathscr{J}_\lambda:H^1_g(M)\times
H^1_g(M)\to \mathbb{R}$ associated with system $\Pl$, namely,
$$\mathscr{J}_\lambda(u,\phi)=\frac{1}{2}\|u\|_{H^1_g(M)}^2+\frac{e}{2}\int_M
\phi u^2 {\rm d}v_g-\frac{e}{4q}\int_M |\nabla_g \phi|^2{\rm
d}v_g-\frac{e}{4q}\int_M\phi^2 {\rm d}v_g-\lambda\int_M
\alpha(x)F(u) {\rm d}v_g.$$ In all our cases (see problems {\bf A},
{\bf B} and {\bf C} above), the functional $\mathscr{J}_\lambda$ is
well-defined and of class $C^1$ on $H^1_g(M)\times H^1_g(M)$. To see
this, we have to consider the second and fifth terms from
$\mathscr{J}_\lambda$; the other terms trivially verify the required
properties. First, a comparison principle and suitable Sobolev
embeddings give that there exists $C>0$ such that for every
$(u,\phi)\in H^1_g(M)\times H^1_g(M)$,
$$0\leq \int_M \phi u^2\d\leq \left(\int_M
\phi^{2^*}\d\right)^{\frac{1}{2^*}}\left(\int_M
|u|^{\frac{4n}{n+2}}\d\right)^{1-\frac{1}{2^*}}\leq
C\|\phi\|_{H^1_g(M)}\|u\|^2_{H^1_g(M)}<\infty,$$ where we used
$3\leq n\leq 6.$ If $\mathcal F:H^1_g(M)\to \mathbb R$ is the
functional defined by $\ds\mathcal{F}(u)=\int_M \alpha(x)F(u)\d$, we
have:
\begin{itemize}
   \item Problem {\bf A}: $\alpha\in L^2(M)$ and $F(s)=s$, $s\in \mathbb R,$   thus   $|\mathcal{F}(u)|\leq
   \|\alpha\|_{L^2(M)}\|u\|_{L^2(M)}<+\infty$ for all $u\in H^1_g(M).$
   \item Problems {\bf B} and {\bf C}: the assumptions allow
   to consider generically that $f$ is subcritical, i.e., there
   exist $c>0$ and $p\in [2,2^*)$ such that $|f(s)|\leq c(|s|+|s|^{p-1})$ for every $s\in \mathbb
   R$. Since $\alpha\in L^\infty(M)$ in every case, we have that $|\mathcal{F}(u)|<+\infty$ for
   every $u\in H^1_g(M)$ and $\mathcal{F}$ is of class $C^1$ on $H^1_g(M).$
\end{itemize}

\noindent {\sc Step 1.} {\it The pair $(u,\phi)\in H^1_g(M)\times
H^1_g(M)$ is a weak solution of $\Pl$ if and only if $(u,\phi)$ is a
critical point of $\mathscr{J}_\lambda$.} Indeed, due to relations
(\ref{schrodingerweak}) and (\ref{maxwellweak}), the claim follows.

By exploring an idea of Benci and Fortunato \cite{BF1}, due to the
Lax-Milgram theorem (see e.g. Brezis \cite[Corollary 5.8]{Brezis}),
we introduce the map $\phi_u:H_g^1(M)\to H_g^1(M)$ by associating to
every $u\in H^1_g(M)$ the unique solution $\phi=\phi_u$ of the
Maxwell equation
$$-\Delta_g \phi+\phi=qu^2.$$  We recall some important properties of the function
$u\mapsto \phi_u$ which are straightforward adaptations of
\cite[Proposition 2.1]{kristalyrepovs} and \cite[Lemma 2.1]{ruiz} to
the Riemannian setting:
\begin{eqnarray}
\label{a-tulajdonsag} \ds\|\phi_u\|_{H^1_g(M)}^2=q\int_M \phi_u u^2 \d,\ \ \  \phi_u\geq 0; \\
\ds u\mapsto \int_{M}\phi_u u^2\d \hbox{ is convex;}\label{c-tulajdonsag} \\
\ds \int_M \left(u\phi_u-v\phi_v\right)(u-v)\d\geq 0 \hbox{ for all
}u,v\in H^1_g(M)\label{d-tulajdonsag}.
\end{eqnarray}
%\begin{lemma}\label{tulajdonsagok} Let $(M,g)$ be a Hadamard manifold. The map $u\mapsto \phi_u:H_g^1(M)\to H_g^1(M)$ has the following properties:
%    \begin{itemize}
%        \item[(a)]\label{a-tulajdonsag} $\ds\|\phi_u\|_{H^1_g}^2=q\int_M \phi_u u^2 \d$, $\phi_u\geq 0$;
%        \item[(b)]\label{b-tulajdonsag} if $u_n \rightharpoonup u$ in $H^1_{g,G}(M)$, then  $\ds \int_M \phi_{u_n} u_n^2 \d \to \int_M \phi_{u} u^2 \d;$
%        \item[(c)]\label{c-tulajdonsag} The map $\ds u\mapsto \int_{M}\phi_u u^2\d$ is convex;
%        \item[(d)]\label{d-tulajdonsag} We have that $\ds \int_M \left(u\phi_u-v\phi_v\right)(u-v)\d\geq 0 \hbox{ for all }u,v\in H^1_g(M)$;
%        \item[(e)]\label{e-tulajdonsag} If $v(x)\leq u(x)$ a.e. $x\in M$, then $\phi_v\leq \phi_u$.
%        %\item[(f)]\label{f-tulajdonsag} We have that $\phi_{tu}=t^2 \phi_u$.
%    \end{itemize}
%
%\end{lemma}
 The "one-variable" energy
functional $\mathcal{E}_{\lambda}:H_g^1(M)\to \mathbb{R}$ associated
with  system $(\mathcal{SM}_\lambda)$ is defined by
\begin{equation}\label{one-dim-energy}
\mathcal{E}_{\lambda}(u)=\frac{1}{2}\|u\|_{H^1_g(M)}^2+\frac{e}{4}\int_M
\phi_u u^2 {\rm d}v_g-\lambda\ds\mathcal{F}(u).
\end{equation}
By using standard variational arguments, one has:\\

\noindent {\sc Step 2.} {\it The pair $(u,\phi)\in H^1_g(M)\times
H^1_g(M)$ is a critical point of $\mathscr{J}_\lambda$ if and only
if $u$ is a critical point of $\mathcal{E}_{\lambda}$ and
$\phi=\phi_u$.} Moreover, we  have that
\begin{equation}
\label{derivalt} \mathcal{E}_{\lambda}'(u)(v)=\int_M(\langle\nabla_g
u,\nabla_g v\rangle+uv+e\phi_u uv)\d-\lambda\int_M
\alpha(x)f(u)v\d.\end{equation}

In the sequel, let $x_0\in M$ be fixed, and $G\subset {\rm
Isom}_g(M)$ and
    $\alpha\in  L^1(M)\cap
L^\infty(M)$  be
      such that hypotheses  ${\boldsymbol{\ds{(H_G^{x_0})}}}$ and ${\boldsymbol{\ds{(\alpha^{x_0})}}}$ are satisfied.
The action of $G$ on $H^{1}_g(M)$ is defined by
\begin{equation}\label{action-of-the-group}
(\sigma u)(x)=u(\sigma^{-1}(x)) \ \ {\rm for\ all}\ \sigma\in G,\
u\in H^{1}_g(M),\ x\in M,
\end{equation}
where $\sigma^{-1}:M\to M$ is the inverse of the isometry $\sigma$.
Let
$$H^{1}_{g,G}(M)=\{u\in H^{1}_g(M):\sigma u=u\
{\rm for\ all}\ \sigma\in G\}$$ be the subspace of $G-$invariant
functions of $H^{1}_g(M)$ and $\mathcal
E_{\lambda,G}:H^{1}_{g,G}(M)\to \mathbb R$ be the rest\-riction of
the energy functional $\mathcal E_{\lambda}$ to $H^{1}_{g,G}(M)$.
The
following statement is crucial in our investigation:\\

\noindent {\sc Step 3.} {\it If $u_G\in H^{1}_{g,G}(M)$ is a
critical point of $\mathcal E_{\lambda,G}$, then it is a critical
point also for $\mathcal E_{\lambda}$ and $\phi_{u_G}$ is
$G-$invariant.}

%\begin{lemma}\label{lemma-szimmetria-elv} Every critical point of  $\mathcal E_{\lambda,G}$
%    is a $G-$invariant weak solution of  $(\mathcal SM_\lambda)$.
%\end{lemma}

{\it Proof of Step 3.} For the first part of the proof, we follow
Krist\'aly \cite[Lemma 4.1]{kristaly}. Due to relation
(\ref{action-of-the-group}), the group $G$ acts continuously on
$H^1_g(M)$.
%In particular, for every $\sigma_1,\sigma_2\in G$, $u\in
%H^1_g(M)$ and $x\in M$ one has
%$$((\sigma_1\sigma_2)u)(x)=u((\sigma_1\sigma_2)^{-1}(x))=u(\sigma_2^{-1}(\sigma_1^{-1}(x)))=(\sigma_2u)(\sigma_1^{-1}(x))=(\sigma_1(\sigma_2u))(x).$$

We claim that $\mathcal E_{\lambda}$ is $G-$invariant. To prove
this, let  $u\in H^1_g(M)$ and $\sigma\in G$ be fixed. Since
$\sigma:M\to M$ is an isometry on $M$,  we have by
(\ref{action-of-the-group}) and the chain rule that $\nabla_g(\sigma
u)(x)=D \sigma_{\sigma^{-1}(x)} \nabla_g u(\sigma^{-1}(x))$ for
every $x\in M$, where
$D\sigma_{\sigma^{-1}(x)}:T_{\sigma^{-1}(x)}M\to T_x M$ denotes the
differential of  $\sigma$ at the point $\sigma^{-1}(x)$. The
(signed) Jacobian determinant of $\sigma$ is 1 and
$D\sigma_{\sigma^{-1}(x)}$ preserves inner products; thus, by
relation (\ref{action-of-the-group}) and a change of variables
$y=\sigma^{-1}(x)$ it turns out that
    \begin{eqnarray*}
        % \nonumber to remove numbering (before each equation)
        \|\sigma u\|_{H^1_g(M)}^2 &=& \int_M \left(|\nabla_g (\sigma u)(x)|_x^2 + |(\sigma
        u)(x)|^2\right){\rm d}v_g(x) \\
        &=&\int_M \left(|\nabla_g
        u(\sigma^{-1}(x))|_{\sigma^{-1}(x)}^2 +
        |u(\sigma^{-1}(x))|^2\right){\rm d}v_g(x) \\ &=&\int_M \left(|\nabla_g
        u(y)|_y^2 + |u(y)|^2\right){\rm
        d}v_g(y)\\&=&\|u\|_{H^1_g(M)}^2.
    \end{eqnarray*}
According to ${\boldsymbol{\ds{(\alpha^{x_0})}}}$, one has that
$\alpha(x)=\alpha_0(d_g(x_0,x))$ for some function
$\alpha_0:[0,\infty)\to \mathbb R$. Since ${\rm Fix}_M(G)=\{x_0\}$,
we have for every $\sigma\in G$ and $x\in M$ that
$$\alpha(\sigma (x))=\alpha_0(d_g(x_0,\sigma (x)))=\alpha_0(d_g(\sigma (x_0),\sigma ( x)))=\alpha_0(d_g( x_0, x))=\alpha(x).$$
Therefore,
\begin{eqnarray*}
% \nonumber to remove numbering (before each equation)
  \mathcal F(\sigma u) &=& \int_M \alpha(x)F((\sigma u)(x)){\rm d}v_g(x)=\int_M \alpha(x)F(u(\sigma^{-1} (x))){\rm d}v_g(x)
   = \int_M \alpha(y)F(u(y)){\rm d}v_g(y) \\
   &=&\mathcal F(u).
\end{eqnarray*}
We now consider the Maxwell equation
    $-\Delta_{g} \phi_{\sigma u}+\phi_{\sigma u} =q (\sigma u)^2$ which reads pointwisely as  $-\Delta_g \phi_{\sigma u}(y)+\phi_{\sigma u}(y) =q
    u(\sigma^{-1}(y))^2,$ $y\in M$.
After a change of variables one has $-\Delta_g \phi_{\sigma
u}(\sigma (x))+\phi_{\sigma u}(\sigma (x)) =q u(x)^2,$ $x\in M,$
which means by the uniqueness that $\phi_{\sigma u}(\sigma
(x))=\phi_u (x).$ Therefore,
\begin{align*}
\int_M \phi_{\sigma u}(x) (\sigma u)^2(x){\rm d}v_g(x)&=\int_M
\phi_{u}(\sigma^{-1} (x))u^2(\sigma^{-1} (x)){\rm d}v_g(x)
\overset{x=\sigma(y)}{=}\int_M \phi_{ u}(y) u^2( y) {\rm d}v_g(y),
\end{align*} which proves the $G-$invariance of $u\mapsto \displaystyle \int_M \phi_u u^2{\rm d}v_g$, thus the claim.

Since the fixed point set of $H^1_{g}(M)$ for  $G$ is precisely
$H^1_{g,G}(M)$, the principle of symmetric criticality of Palais
\cite{Palais} shows that every critical point $u_G\in H^1_{g,G}(M)$
of the functional $\mathcal E_{\lambda,G}$ is also a critical point
of $\mathcal E_\lambda$. Moreover, from the above uniqueness
argument, for every $\sigma\in G$ and $x\in M$ we have
$\phi_{u_G}(\sigma x)=\phi_{\sigma u_G}(\sigma x)=\phi_{u_G}(x)$, i.e., $\phi_{u_G}$ is $G-$invariant.  \hfill $\square$\\

%The principle of symmetric criticality will play a crucial role in
%our investigations:
%\begin{proposition}\label{Palais-PSC} {\rm \cite[\textbf{Principle of symmetric criticality}]{Palais,Willem}}
%    Let $W$ be a real Banach space, $G$ be a compact topological group
%    acting continuously on $W$ by a map $[\sigma,u]\mapsto \sigma u$
%    from $G\times W$ to $W$, and $h:W\to \mathbb R$ be a $G-$invariant
%    $C^1-$function, i.e., $h(\sigma u)=h(u)$ for every $(\sigma,u)\in
%    G\times W$. If $u_G\in {\rm Fix}_W(G)=\{u\in W:\sigma u=u\ for\ all
%    \ \sigma\in G\}$ is a critical point of $h_G=h|_{{\rm Fix}_W(G)},$
%    then $u_G$ is also a critical point of $h.$
%\end{proposition}
%To finish this section we will recall then main ingredient of the
%proof of Theorem \ref{Ricceri}. This results is due to Ricceri:\newpage

\noindent Summing up \textsc{Steps 1-3}, we have the following
implications: for an element $u\in H^1_{g,G}(M)$,
\begin{equation}\label{implikaciok}
    \mathcal E_{\lambda,G}'(u)=0\   \Rightarrow\   \mathcal
E_{\lambda}'(u)=0\    \Leftrightarrow\   \mathscr
J_{\lambda}'(u,\phi_u)=0\   \Leftrightarrow (u,\phi_u)\  {\rm is\ a\
weak\ solution\ of}\ (\mathcal{SM}_\lambda).
\end{equation}
 Consequently, in order
to guarantee $G-$invariant weak solutions for
$(\mathcal{SM}_\lambda)$, it is enough to produce critical points
for the energy functional $\mathcal E_{\lambda,G}:H^1_{g,G}(M)\to
\mathbb R$. While the embedding $H^{1}_g(M)\hookrightarrow L^p(M)$
is only continuous for every $p\in [2,2^*]$, we adapt the main
results from Skrzypczak and Tintarev \cite{S-Tintarev} in order to
regain some compactness by exploring the presence of group
symmetries:
\begin{proposition}\label{Tintarev} {\rm \cite[Theorem 1.3 \& Proposition 3.1]{S-Tintarev}} Let $(M, g)$ be an  $n-$dimensional homogeneous Ha\-da\-mard
    manifold and $G$ be a compact connected subgroup of ${\rm
        Isom}_g(M)$ such that ${\rm Fix}_M(G)$ is a singleton. Then  $H^1_{g,G}(M)$ is compactly embedded into $L^p(M)$ for every $p\in (2,2^*)$.
\end{proposition}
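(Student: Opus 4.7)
The goal is to prove the compact embedding $H^{1}_{g,G}(M)\hookrightarrow L^p(M)$ for $p\in(2,2^*)$. I would proceed along the classical three-step template of Lions-type symmetry results: (i) extract a weakly convergent subsequence, (ii) use Rellich--Kondrachov on geodesic balls for local strong convergence, and (iii) control the tail at infinity by exploiting the $G$-invariance via a Strauss-type pointwise decay.

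Concretely, let $\{u_j\}\subset H^{1}_{g,G}(M)$ be a bounded sequence. Since $H^{1}_g(M)$ is a Hilbert space and the $G$-invariant subspace $H^{1}_{g,G}(M)$ is closed (the action (\ref{action-of-the-group}) is continuous and linear, so $H^{1}_{g,G}(M)$ is an intersection of kernels of $\mathrm{id}-\sigma$), after extracting a subsequence $u_j\rightharpoonup u_\infty$ weakly in $H^{1}_{g,G}(M)$. For each $R>0$, the closure of $B_g(x_0,R)$ is compact by completeness of $(M,g)$, so the classical Rellich--Kondrachov theorem for relatively compact Riemannian domains gives $u_j\to u_\infty$ strongly in $L^p(B_g(x_0,R))$ for every $p\in[1,2^*)$.

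The essential step is a uniform tail estimate: for every $\varepsilon>0$ there exists $R_\varepsilon>0$ such that
$$\int_{M\setminus B_g(x_0,R_\varepsilon)}|v|^p\,\d\le \varepsilon$$
for every $v\in H^{1}_{g,G}(M)$ with $\|v\|_{H^1_g(M)}\le \sup_j\|u_j\|_{H^1_g(M)}$. To obtain this I would use hypothesis ${\boldsymbol{\ds{(H_G^{x_0})}}}$ as follows. The isotropy representation of $G$ on $T_{x_0}M$ is orthogonal and, since $\mathrm{Fix}_M(G)=\{x_0\}$, has $\{0\}$ as its fixed set; by compactness and connectedness of $G$ each orbit on the unit tangent sphere of $T_{x_0}M$ is a positive-dimensional submanifold. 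Transporting this via $\exp_{x_0}$ (a diffeomorphism by Cartan--Hadamard) and using that $(M,g)$ is homogeneous together with Bishop--Gromov comparison, one sees that for every $x\ne x_0$ the orbit $Gx$ lies in the geodesic sphere $S_g(x_0,d_g(x_0,x))$ with induced Riemannian measure bounded below by a monotone function $V(r)$ satisfying $V(r)\to\infty$ as $r\to\infty$. Since $v$ is constant on $G$-orbits, averaging $v^2$ over the orbit and applying a fiberwise trace/Sobolev estimate (the global Hoffman--Spruck embedding evaluated in a tubular neighborhood of $Gx$) yields a Strauss-type pointwise bound
$$|v(x)|\le \kappa(d_g(x_0,x))\,\|v\|_{H^1_g(M)},\qquad \kappa(r)\xrightarrow[r\to\infty]{}0.$$
Inserting this into the interpolation $\int_{\{d_g(x_0,\cdot)>R\}}|v|^p\le \bigl(\sup_{d_g(x_0,\cdot)>R}|v|\bigr)^{p-2}\|v\|_{L^2(M)}^2$ closes the tail estimate, since $p>2$ and $H^{1}_g(M)\hookrightarrow L^2(M)$ continuously.

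Combining the three steps is routine: given $\varepsilon>0$, pick $R$ so the tail estimate gives an error at most $\varepsilon$ simultaneously for all $u_j$ and for $u_\infty$ (whose $H^1_g(M)$-norm is controlled by weak lower semicontinuity), then pick $j$ large so that $\|u_j-u_\infty\|_{L^p(B_g(x_0,R))}<\varepsilon$; a triangle inequality concludes $u_j\to u_\infty$ in $L^p(M)$. The main obstacle is the Strauss-type decay: on an abstract homogeneous Hadamard manifold one cannot directly invoke the Euclidean radial lemma, and the effective decay rate must be extracted from geometric data (curvature comparison, orbit-volume growth, and positive-dimensionality of every $G$-orbit away from $x_0$). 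This is precisely the technical engine that Skrzypczak--Tintarev develop and which the proposition here cites; the restriction to strictly subcritical $p<2^*$ is forced by the local Rellich step, while $p>2$ is needed for the interpolation at infinity.
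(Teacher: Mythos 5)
The paper offers no proof of this proposition at all: it is imported wholesale from Skrzypczak and Tintarev \cite[Theorem 1.3 \& Proposition 3.1]{S-Tintarev}, so your sketch has to be judged on its own merits. Its architecture --- weak limit, Rellich--Kondrachov on balls, and a uniform tail estimate extracted from the largeness of $G$-orbits at infinity --- is the correct one and is indeed the strategy of the cited reference. Your observation that a unit vector fixed by the isotropy representation would force a whole fixed geodesic, contradicting ${\rm Fix}_M(G)=\{x_0\}$, is also the right way to see that every orbit away from $x_0$ is positive-dimensional.

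The genuine gap is in the mechanism you propose for the tail: the pointwise Strauss-type bound $|v(x)|\le \kappa(d_g(x_0,x))\,\|v\|_{H^1_g(M)}$ is in general \emph{false} for $v\in H^1_{g,G}(M)$ under hypothesis ${\boldsymbol{\ds{(H_G^{x_0})}}}$. The hypothesis allows groups with low-dimensional orbits --- precisely the examples the paper emphasizes, e.g.\ $G=SO(n_1)\times\cdots\times SO(n_l)$ with $n_j\ge 2$. Take $n=4$ and $G=SO(2)\times SO(2)$ on $\mathbb R^4$: the orbit of a point of the form $(x_1,0)$ is a circle, so near such orbits a $G$-invariant function is effectively an $H^1$ function of $3$ variables and need not be locally bounded; hence $\sup_{d_g(x_0,x)=r}|v|$ can be $+\infty$ and your interpolation $\int_{\{d_g(x_0,\cdot)>R\}}|v|^p\le(\sup|v|)^{p-2}\|v\|_{L^2}^2$ is vacuous. (Pointwise decay is special to the fully radial case $G=SO(n)$, where the quotient is one-dimensional.) The standard repair --- and what the cited reference actually does --- is to replace the pointwise supremum by the local concentration functional: since the orbit $Gx$ contains $N(d_g(x_0,x))\to\infty$ points that are pairwise $1$-separated (this is where homogeneity, divergence of geodesics in nonpositive curvature, and positive-dimensionality of orbits enter), $G$-invariance and disjointness give $N\cdot\int_{B_g(x,1)}|v|^2\le\|v\|_{L^2(M)}^2$, so $\sup_{d_g(x_0,x)\ge R}\int_{B_g(x,1)}|v|^2\to 0$ uniformly on bounded sets of $H^1_{g,G}(M)$; the Lions vanishing lemma, applicable because homogeneity yields a uniform local Sobolev constant, then produces the $L^p$ tail estimate for $2<p<2^*$. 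With that substitution your three-step scheme closes; as written, the key step fails.
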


\section{Proof of the main results}
\subsection{Schr\"odinger-Maxwell systems of Poisson type}

Consider the operator $\mathscr{L}$ on $H^1_{g}(M)$ given by
$$\mathscr{L}(u)=-\Delta_g u +u+e\phi_u u.$$ The following
comparison principle can be stated:
\begin{lemma}\label{comparison}
    Let $(M,g)$ be an $n-$dimensional Hadamard manifold $(3\leq n\leq
    6)$, $u,v\in H_g^1(M)$.
\begin{itemize}
  \item[{\rm (i)}] If $\mathscr{L}(u)\leq \mathscr{L}(v)$   then $u\leq v$.
  \item[{\rm (ii)}] If $0\leq u\leq v$ then $\phi_u\leq \phi_v$.
\end{itemize}
\end{lemma}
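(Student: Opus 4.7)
My plan for both parts is to use the Stampacchia method: test the weak differential inequality against the positive part of the appropriate difference, which is admissible because taking positive parts preserves $H_g^1(M)$.

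I would dispatch part (ii) first, since it is linear in $\phi$ and supplies a tool I want to use in (i). Subtracting the two Maxwell equations gives, weakly,
\begin{equation*}
-\Delta_g(\phi_u-\phi_v)+(\phi_u-\phi_v)=q(u^2-v^2),
\end{equation*}
and the hypothesis $0\le u\le v$ makes the right-hand side non-positive. Testing against $(\phi_u-\phi_v)^+\in H_g^1(M)$ and using the Stampacchia chain rule $\nabla_g f\cdot\nabla_g f^+=|\nabla_g f^+|^2$ a.e., one obtains $\|(\phi_u-\phi_v)^+\|_{H_g^1(M)}^2\le 0$, whence $\phi_u\le\phi_v$ almost everywhere on $M$.

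For part (i), I would set $w:=(u-v)^+\in H_g^1(M)$ and test the weak inequality $\mathscr L(u)\le\mathscr L(v)$ against $w\ge 0$. The $-\Delta_g+\mathrm{Id}$ piece yields $\|w\|_{H_g^1(M)}^2$, so the heart of the matter is the coupling $e\int_M(\phi_u u-\phi_v v)w\,\d$. Using the algebraic identity $\phi_u u-\phi_v v=\phi_u(u-v)+v(\phi_u-\phi_v)$ together with $(u-v)w=w^2$ on $\{u>v\}$, the inequality rewrites as
\begin{equation*}
\|w\|_{H_g^1(M)}^2+e\int_M\phi_u w^2\,\d+e\int_M v(\phi_u-\phi_v)w\,\d\le 0.
\end{equation*}
The first two terms are non-negative by $\phi_u\ge 0$ (property (\ref{a-tulajdonsag})); to finish I would argue the remaining integral is non-negative by combining the monotonicity (\ref{d-tulajdonsag}) of $u\mapsto u\phi_u$ with part (ii) applied to the pair $u\wedge v\le v$.

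The hard part is precisely this coupling term, and that is the main obstacle: because $u\mapsto\phi_u$ is non-local, pointwise control of $u^2-v^2$ on $\{u>v\}$ does not transfer into pointwise control of $\phi_u-\phi_v$ there. My intended workaround is to bound the sign-indefinite contribution by the non-negative quadratic term $e\int_M\phi_u w^2\,\d$ using (\ref{d-tulajdonsag}); failing a direct treatment, I would first reduce to the case $u,v\ge 0$ by applying (i) with $v\equiv 0$ (where $\mathscr L(0)=0\le\mathscr L(u)$ forces $0\le u$ because on $\{u>0\}$ the coupling integrand $\phi_u u\cdot u^+=\phi_u(u^+)^2$ is manifestly non-negative), and then handle the general case by localizing to $\{u>v\}$ and exploiting the already-established comparison for $\phi$.
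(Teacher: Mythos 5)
Your part (ii) is correct and is exactly the paper's argument: subtract the two Maxwell equations, test against $(\phi_u-\phi_v)^+$, and use that $u^2-v^2\le 0$; nothing more is needed there.

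Part (i), however, has a genuine gap, and it sits precisely where you flag it. After testing against $w=(u-v)^+$ you split the coupling term via $\phi_u u-\phi_v v=\phi_u(u-v)+v(\phi_u-\phi_v)$, which creates the cross term $e\int_M v(\phi_u-\phi_v)w\,\d$, and neither of your proposed repairs closes it. The monotonicity \eqref{d-tulajdonsag} is a statement about the single quantity $\int_M(u\phi_u-v\phi_v)(u-v)\,\d$; once you have broken that product apart, it says nothing about the cross term, and there is no pointwise control of $\phi_u-\phi_v$ on $\{u>v\}$ because $u\mapsto\phi_u$ is nonlocal (part (ii) applied to $u\wedge v\le v$ compares $\phi_{u\wedge v}$ with $\phi_v$, not $\phi_u$ with $\phi_v$). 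The reduction to $u,v\ge 0$ is also unavailable: the hypothesis of (i) does not give $\mathscr L(u)\ge 0$, and even for non-negative $u,v$ the sign of $\phi_u-\phi_v$ on $\{u>v\}$ remains undetermined. So the proposal proves (ii) but not (i).

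The paper's proof of (i) avoids the decomposition altogether: testing $\mathscr L(u)\le\mathscr L(v)$ against $(u-v)_+$ it keeps the coupling term intact,
\begin{equation*}
e\int_M(u\phi_u-v\phi_v)(u-v)_+\,\d\;=\;e\int_{\{u>v\}}(u\phi_u-v\phi_v)(u-v)\,\d,
\end{equation*}
and discards it as non-negative by invoking the monotonicity \eqref{d-tulajdonsag}, after which $\int_{\{u>v\}}\bigl(|\nabla_g u-\nabla_g v|^2+(u-v)^2\bigr)\d\le 0$ gives the contradiction. The moral is that $(u\phi_u-v\phi_v)(u-v)_+$ must be handled as one monotone expression rather than as a local piece plus a nonlocal remainder. (To be fair, \eqref{d-tulajdonsag} is literally stated with the signed difference $u-v$, so even the paper's one-line appeal to it with $(u-v)_+$ deserves a word of justification; in the only place the paper actually uses (i) --- comparison with the zero function --- the coupling term reduces to $\int_{\{u<0\}}u^2\phi_u\,\d\ge 0$ and is manifestly harmless. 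Your own fallback for that special case is essentially this observation, and it is correct there; it just does not extend to arbitrary $u,v$.)
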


{\it Proof.} {\rm (i)}   Assume that $A=\{x\in M: u(x)>v(x)\}$ has a
    positive Riemannian measure. Then multiplying $%
    \mathscr{L}(u)\leq \mathscr{L}(v)$ by $(u-v)_+$, an integration yields that
    $$\int_A |\nabla_g u-\nabla_g v|^2\d+\int_A (u-v)^2\d+e\int_A (u \phi_u-v\phi_v) (u-v)\d \leq
    0.$$
The latter inequality and relation \eqref{d-tulajdonsag}
 produce a contradiction.

(ii) Assume that $B=\{x\in M: \phi_u(x)>\phi_v(x)\}$ has a
    positive Riemannian measure. Multiplying the Maxwell-type equation $-\Delta_g
    (\phi_u-\phi_v)+\phi_u-\phi_v=q(u^2-v^2)$ by
    $(\phi_u-\phi_v)_+$, we obtain that
  $$\int_B|\nabla_g \phi_u-\nabla_g \phi_v|^2\d+\int_B (\phi_u-\phi_v)^2\d
  =q\int_B(u^2-v^2)(\phi_u-\phi_v)\d\leq 0,
    $$
    a contradiction.
\hfill $\square$\\

{\it Proof of Theorem \ref{uniq-elso}.} Let $\lambda=1$ and for
simplicity, let $\mathcal{E}=\mathcal{E}_{1}$ be the energy
functional from (\ref{one-dim-energy}). First of all, the function
$\ds u\mapsto \frac{1}{2}\|u\|_{H^1_g(M)}^2$ is strictly convex on
$H^1_{g}(M).$ Moreover, the linearity of $u\mapsto \mathcal
F(u)=\displaystyle \int_M \alpha(x)u(x){\rm d}v_g(x)$ and property
\eqref{c-tulajdonsag} imply that the energy functional $\mathcal{E}$
is strictly convex on $H^1_{g}(M)$. Thus $\mathcal{E}$ is
sequentially weakly lower semicontinuous on $H^1_{g}(M)$,  it is
bounded from below and coercive. Now the basic result of the
calculus of variations implies that $\mathcal{E}$ has a unique
(global) minimum point $u\in H^1_{g}(M)$, see Zeidler \cite[Theorem
38.C and Proposition 38.15]{Zeidler}, which is also the unique
critical point of $\mathcal{E},$ thus $(u,\phi_u)$ is the unique
weak solution of $(\mathcal{SM})$.  Since $\alpha \geq 0,$ Lemma
\ref{comparison} (i) implies that $u\geq 0.$

Assume the function $\alpha$ satisfies
${\boldsymbol{\ds{(\alpha^{x_0})}}}$ for some $x_0\in M$  and let
$G\subset {\rm Isom}_g(M)$ be such that
${\boldsymbol{\ds{(H_G^{x_0})}}}$ holds. Then we can repeat the
above arguments for
 $\mathcal{E}_{1, G}=\mathcal{E}|_{H^1_{g,G}(M)}$ and $H^1_{g,G}(M)$ instead of $\mathcal
 E$  and $H^1_{g}(M)$, respectively, obtaining by (\ref{implikaciok}) that $(u,\phi_u)$ is a $G-$invariant  weak solution for $(\mathcal{SM})$.  \hfill $\square$\\

In the sequel we focus our attention to the system  $(\mathscr{R})$
from \S \ref{section1}; namely, we have
\begin{lemma}\label{uniqesolODE}
    System $(\mathscr{R})$ has a unique, non-negative pair of solutions belonging to $C^\infty(0,\infty)\times C^\infty(0,\infty)$.
\end{lemma}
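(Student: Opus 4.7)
The strategy is to recognize $(\mathscr{R})$ as the radial reduction of problem $(\mathcal{SM})$ on the $n$-dimensional model space of constant sectional curvature $c$, and then invoke Theorem~\ref{uniq-elso}. Let $(M_c,g_c)$ denote the simply connected $n$-dimensional space form with constant sectional curvature $c\le 0$ (Euclidean for $c=0$, hyperbolic for $c<0$); this is a homogeneous Hadamard manifold. Fix $x_0\in M_c$ and let $G$ be the isotropy subgroup of $\mathrm{Isom}_{g_c}(M_c)$ at $x_0$, isomorphic to $SO(n)$, which is compact, connected, and satisfies $\mathrm{Fix}_{M_c}(G)=\{x_0\}$; hence $(H_G^{x_0})$ holds. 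Set $\alpha(x):=\alpha_0(d_{g_c}(x_0,x))$; then $\alpha$ is radial w.r.t.\ $x_0$ (so $(\alpha^{x_0})$ holds) and, using geodesic polar coordinates on $M_c$ (for which ${\rm d}v_{g_c}={\bf s}_c(r)^{n-1}\,{\rm d}r\,{\rm d}\sigma_{n-1}$), the assumption $\alpha_0\in L^2([0,\infty),{\bf s}_c(r)^{n-1}{\rm d}r)$ is equivalent to $\alpha\in L^2(M_c)$.

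With these choices Theorem~\ref{uniq-elso} yields a unique non-negative weak solution $(u_0,\phi_0)\in H^1_{g_c}(M_c)\times H^1_{g_c}(M_c)$ of $(\mathcal{SM})$ which is moreover $G$-invariant. Since $\mathrm{Fix}_{M_c}(G)=\{x_0\}$, any $G$-invariant function on $M_c$ depends only on $d_{g_c}(x_0,\cdot)$, so there exist $h_1,h_2:[0,\infty)\to[0,\infty)$ with $u_0=h_1\circ d_{g_c}(x_0,\cdot)$ and $\phi_0=h_2\circ d_{g_c}(x_0,\cdot)$. On the space form the Laplace comparison is sharp, i.e.\ $\Delta_{g_c} d_{g_c}(x_0,\cdot)=(n-1){\bf ct}_c(d_{g_c}(x_0,\cdot))$, so by (\ref{Laplace-chain}) the Laplace--Beltrami operator applied to a radial profile equals $h''(r)+(n-1){\bf ct}_c(r)h'(r)$. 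Testing the weak equations (\ref{schrodingerweak})--(\ref{maxwellweak}) against smooth compactly supported radial test functions (which are dense in the $G$-invariant Sobolev space) then produces precisely the two pointwise ODEs of $(\mathscr{R})$, while the polar-coordinate identity $\|h\circ d_{g_c}(x_0,\cdot)\|^2_{H^1_{g_c}(M_c)}=n\omega_n\int_0^\infty (h'(r)^2+h(r)^2){\bf s}_c(r)^{n-1}\,{\rm d}r$ encodes the two integrability conditions.

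Uniqueness for $(\mathscr{R})$ is then immediate: any other solution pair would lift to another non-negative $G$-invariant weak solution of $(\mathcal{SM})$ on $M_c$, contradicting the uniqueness part of Theorem~\ref{uniq-elso}; non-negativity is inherited directly. Finally, $C^\infty$-regularity on $(0,\infty)$ is obtained by a standard elliptic bootstrap: the ODEs rewrite as $h_1''=-(n-1){\bf ct}_c(r)h_1'+h_1+eh_1h_2-\alpha_0$ and $h_2''=-(n-1){\bf ct}_c(r)h_2'+h_2-qh_1^2$, the weight ${\bf ct}_c$ is smooth on $(0,\infty)$, one-dimensional Sobolev embeddings give $h_i\in C^1$, and iterating the equations yields $C^k$-regularity for every $k$. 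The only delicate step is the passage from the weak manifold-level equations to the pointwise ODEs, which requires both the density of smooth radial test functions in $H^1_{g_c,G}(M_c)$ and the \emph{equality} case of the Laplace comparison on the model space; once this radial reduction is carried out cleanly, Theorem~\ref{uniq-elso} does all of the existence, uniqueness and non-negativity work.
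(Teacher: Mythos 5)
Your proposal is correct and follows essentially the same route as the paper: pass to the space form $(M_c,g_c)$, apply Theorem \ref{uniq-elso} with $G=SO(n)$ to obtain the unique non-negative radial weak solution, use the equality case of the Laplace comparison together with (\ref{Laplace-chain}) to reduce to the ODE system, and deduce uniqueness of $(h_1^c,h_2^c)$ by lifting any competitor back to a radial weak solution of $(\mathcal{SM}_c)$. Your treatment of the weak-to-pointwise passage (density of radial test functions) is in fact slightly more explicit than the paper's.
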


{\it Proof.} Let $c\leq 0$ and $\alpha_0\in L^2([0,\infty),{\bf
   s}_{c}(r)^{n-1}{\rm d}r)$.
    Let us consider the Riemannian space form $(M_c,g_c)$
    with  constant sectional curvature $c\leq 0$, i.e., $(M_c,g_c)$ is
    either the Euclidean space $(\mathbb R^n,g_{\rm euc})$ when $c=0$, or the hyperbolic space $(\mathbb H^n,g_{\rm hyp})$ with (scaled) sectional curvature $c<0.$
    Let $x_0\in M$ be fixed and $\alpha(x)=\alpha_0(d_{g_c}(x_0,x)),$ $x\in M.$ Due to the integrability assumption on $\alpha_0$, we have that $\alpha\in L^2(M)$. Therefore, we are in the position to apply
    Theorem \ref{uniq-elso} on $(M_c,g_c)$  (see examples from Remark \ref{fontosrm}) to the problem
    \begin{equation*}
    \ \left\{
    \begin{array}{lll}
    -\Delta_g u+u+e u\phi=\alpha(x) & \mbox{in} &
    M_c , \\
    -\Delta_g \phi+\phi=qu^2 & \mbox{in} & M_c,
    \end{array}%
    \right. \eqno{(\mathcal{SM}_c)}
    \end{equation*}
    concluding that it has a unique, non-negative  weak solution $(u_0,\phi_{u_0})\in
H_{g_c}^1(M_c)\times H_{g_c}^1(M_c)$, where $u_0$ is the unique
global minimum point of the "one-variable" energy functional
associated with problem $({\mathcal SM}_{c})$. Since $\alpha$ is
radially symmetric in $M_c$, we may consider
    the group $G=SO(n)$ in the second part of Theorem \ref{uniq-elso} in order to prove that $(u_0,\phi_{u_0})$ is $SO(n)-$invariant, i.e., radially
    symmetric. In particular, we can represent these functions as
    $u_0(x)=h_1^c(d_{g_c}(x_0,x))$ and
    $\phi_0(x)=h_2^c(d_{g_c}(x_0,x))$ for some  $h_i^c:[0,\infty)\to
    [0,\infty)$, $i=1,2.$
By using formula (\ref{Laplace-chain}) and the Laplace comparison
theorem for ${\bf K}=c$ it follows that the equations from
$({\mathcal SM}_{c})$ are transformed into the first two equations
of $({\mathscr R})$ while the second two relations in $({\mathscr
R})$  are nothing but the "radial" integrability conditions
inherited from the fact that $(u_0,\phi_{u_0})\in
H_{g_c}^1(M_c)\times H_{g_c}^1(M_c).$ Thus, it turns out that
problem
    $({\mathscr R})$ has a non-negative pair of solutions  $(h_1^{c},h_2^{c})$. Standard regularity results show that $(h_1^{c},h_2^{c})\in
    C^\infty(0,\infty)\times C^\infty(0,\infty)$.
    Finally, let us assume that $({\mathscr R})$ has another non-negative pair of solutions $(\tilde h_1^{c},\tilde
    h_2^{c})$,  distinct from   $(h_1^{c},h_2^{c})$. Let $\tilde u_0(x)=\tilde h_1^c(d_{g_c}(x_0,x))$ and
    $\tilde \phi_0(x)=\tilde h_2^c(d_{g_c}(x_0,x))$.
There are two cases: (a) if $h_1^{c}= \tilde h_1^{c}$ then
$u_0=\tilde u_0 $ and by the uniqueness of solution for  the Maxwell
equation it follows that $\phi_0=\tilde \phi_0$, i.e., $h_2^{c}=
\tilde h_2^{c}$, a contradiction; (b) if $h_1^{c}\neq  \tilde
h_1^{c}$ then  $u_0\neq \tilde u_0 $. But the latter relation is
absurd since  both elements $u_0$ and $\tilde u_0 $ appear as unique
global minima of the "one-variable" energy functional associated
with $({\mathcal{SM}}_{c})$. \hfill
$\square$\\

{\it Proof of Theorem \ref{rigidity}.} "(ii)$\Rightarrow$(i)": it
follows directly from Lemma \ref{uniqesolODE}.

"(i)$\Rightarrow$(ii)": Let $x_0\in M$ be fixed and assume that the
pair $(h_1 ^{c}(d_{g}(x_{0},\cdot)),h_2 ^{c}(d_{g}(x_{0},\cdot))$ is
the unique pointwise solution to
    $({\mathcal{SM})}$, i.e.,
    $$\left\{
    \begin{array}{ll}
    -\Delta_g h_1^c(d_g(x_0,x))+h_1^c(d_g(x_0,x))+eh_1^c(d_g(x_0,x))h_2^c(d_g(x_0,x))=\alpha(d_g(x_0,x)), \ x\in M,\\
        -\Delta_g h_2^c(d_g(x_0,x))+h_2^c(d_g(x_0,x))=qh_1^c(d_g(x_0,x))^2, \ x\in M.
    \end{array}
    \right.$$
    By applying formula (\ref{Laplace-chain}) to the second equation,
    we  arrive to
     $$-h_{2}^c(d_g(x_0,x))''-h_{2}^c(d_g(x_0,x))'\Delta_g(d_g(x_0,x))+h^c_{2}(d_g(x_0,x))=q h_1^c(d_g(x_0,x))^2,\ x\in M.$$
Subtracting the second equation of the system $(\mathscr{R})$ from
the above one, we have that
\begin{equation}\label{h2egyenlet}
    h_{2}^c(d_g(x_0,x))'[\Delta_g(d_g(x_0,x))-(n-1)\textbf{ct}_c(d_g(x_0,x))]=0,
\ x\in M.
\end{equation}
 Let us suppose that there exists a set $A\subset M$ of
non-zero Riemannian measure such that $h_{2}^c(d_g(x_0,x))'=0$ for
every $x\in A.$ By a continuity reason, there exists a
non-degenerate interval $I\subset \mathbb R$ and a constant $c_0\geq
0$ such that $h_{2}^c(t)=c_0$ for every $t\in I$. Coming back to the
system $(\mathscr{R})$, we observe that
$h_{1}^c(t)=\sqrt{\frac{c_0}{q}}$ and
$\alpha_0(t)=\sqrt{\frac{c_0}{q}}(1+ec_0)$ for every $t\in I$.
Therefore,
$\alpha(x)=\alpha_0(d_g(x_0,x))=\sqrt{\frac{c_0}{q}}(1+ec_0)$ for
every $x\in A$, which contradicts the assumption on $\alpha.$

Consequently, by (\ref{h2egyenlet}) we have
    $\Delta_gd_g(x_0,x)= (n-1){\bf ct}_c(d_g(x_0,x))$ pointwisely on $M$. The latter
    relation can be equivalently transformed into
    \begin{equation}\label{delta-egyenloseg}\Delta_g%
    w_c(d_{g}(x_{0},x))=1,\ x\in M,$$ where
    $$w_c(r)=\int_{0}^{r}{\bf s}_c(s)^{-n+1}\int_{0}^{s}{\bf s}_c(t)%
    ^{n-1}{\rm d}t{\rm d}s.\end{equation} Let $0<\tau$ be fixed
    arbitrarily. The unit outward normal vector to the forward geodesic
    sphere $S_g(x_0,\tau)=\partial B_g(x_0,\tau)=\{x\in
    M:d_g(x_0,x)=\tau\}$ at $x\in S_g(x_0,\tau)$ is given by $\mathbf{n}=%
    \nabla_g d_g(x_0,x)$. Let us denote by ${\text
        d}\varsigma_g(x)$ the canonical volume form on $S_g(x_0,\tau)$
    induced by ${\rm d}v_g(x)$.
    By Stoke's formula and $%
    \langle\mathbf{n},\mathbf{n}\rangle=1$
    we have that
    \begin{eqnarray*}
        \mathrm{Vol}_{g}(B_g(x_{0},\tau )) &=&\int_{B_g(x_{0},\tau )}%
        \Delta_g(w_c(d_g(x_0,x)))\d =\int_{B_g(x_{0},\tau )}\mathrm{div}(\nabla_g
        (w_c(d_g(x_0,x))))\d \\
        &{=}&\int_{S_g(x_{0},\tau )}\langle\mathbf{n},w_c^{\prime }(d_g(x_0,x))%
        \nabla_g d_g(x_0,x\rangle \d \\
        &=&w_c^{\prime }(\tau ) \mathrm{Area}_{g}({S_g(x_{0},\tau
            )}).
    \end{eqnarray*}%
    Therefore,
    \begin{equation*}
    \frac{\mathrm{Area}_{g}({S_g(x_{0},\tau )})}{\mathrm{Vol}%
        _{g}(B_g(x_{0},\tau ))}=\frac{1}{w_c^{\prime }(\tau )}=\frac{{\bf s}_c%
        (\tau )^{n-1}}{\displaystyle\int_{0}^{\tau }{\bf s}_c%
        (t)^{n-1}{\rm d}t}.
    \end{equation*}%
Integrating the latter expression, it follows
    that
    \begin{equation}\label{egyenloseg-utolssso}
    \frac{\mathrm{Vol}_{g}(B_g(x_{0},\tau
        ))}{V_{c,n}(\tau)}=\lim_{s\to
        0^+}\frac{\mathrm{Vol}_{g}(B_g(x_{0},s ))}{V_{c,n}(s)}=1.
    \end{equation}
 In fact, the Bishop-Gromov volume comparison theorem implies that $$\frac{\mathrm{Vol}_{g}(B_g(x,\tau
        ))}{V_{c,n}(\tau)}=1 \hbox{ for all } x\in M,\ \tau>0.$$
    Now, the above equality implies that the sectional curvature is constant, ${\bf K}=c$, which concludes the proof.
   \hfill $\square$

\subsection{Schr\"odinger-Maxwell systems involving
sublinear terms at infinity} In this subsection we  prove Theorem
\ref{theorem-sublinear}.

(i) Let $\lambda\geq 0.$ If we choose $v=u$ in
\eqref{schrodingerweak} we obtain that
$$\int_M \left(|\nabla_g u|^2+u^2+e  \phi_u u^2 \right)\d=
\lambda\int_M \alpha(x)f(u)u\d.$$
    Due to the assumptions $(f_1)-(f_3)$, the number $\ds c_f=\max_{s>{0}}\frac{f(s)}{s}$ is well-defined and positive.
    Thus, by  \eqref{a-tulajdonsag} we have that
    $$\ds \|u\|_{H^1_g(M)}^2\leq \lambda c_f \|\alpha\|_{L^\infty(M)}\int_M u^2 \d\leq \lambda c_f \|\alpha\|_{L^\infty(M)}  \|u\|_{H^1_g(M)}^2.$$
    Therefore,  if $\lambda<c_f^{-1}\|\alpha\|_{L^\infty(M)}^{-1}:=\widetilde{\lambda}_0$, then the last inequality gives  $u=0$.
By the Maxwell equation we also have that $\phi=0$, which concludes
the proof of (i).

    (ii) The proof is divided into
    several steps.

{\sc Claim 1.} {\it The  energy functional $\mathcal E_\lambda$
 is coercive for every $\lambda\geq0$.} Indeed, due
to ({$f_2$}), we have that for every $\eps>0$ there exists
$\delta>0$ such that $|F(s)|\leq \eps |s|^2$  for every $
|s|>\delta.$ Thus
    \begin{align*}\mathcal F(u) &=\int_{\{u>\delta\}}\alpha(x)F(u)\d+\int_{\{u\leq \delta \}}\alpha(x)F(u)\d \leq \eps \|\alpha\|_{L^\infty(M)}  \|u\|_{H^1_g(M)}^2+\|\alpha\|_{L^1(M)} \max_{|s|\leq \delta}|F(s)|.
    \end{align*}
    Therefore (see \eqref{one-dim-energy}), $$
    \mathcal E_\lambda (u)\geq \left( \frac{1}{2}-\eps \lambda  \|\alpha\|_{L^\infty(M)} \right)\|u\|_{H^1_g(M)}^2-\lambda \|\alpha\|_{L^1(M)}  \sup_{|s|\leq \delta}|F(s)|.
    $$
    In particular, if $0<\varepsilon< (2 \lambda \| \alpha\|_{L^\infty(M)})^{-1}$ then $\mathcal{E}_{\lambda}(u) \to \infty$ as $\|u\|_{H^1_g(M)}\to \infty$.

{\sc Claim  2.} {\it $\mathcal{E}_{\lambda,G}$ satisfies the
Palais-Smale condition for  every $\lambda\geq0$.}
    Let $\{u_j\}_j\subset H^1_{g,G}(M)$ be a Palais-Smale sequence, i.e., $\{\mathcal{E}_{\lambda,G}(u_j)\}$ is bounded and
$\Vert(\mathcal{E}_{\lambda,G})^{\prime}(u_j)\Vert_{H^1_{g,G}(M)^{\ast}}\rightarrow0$
as $j\rightarrow\infty.$ Since $\mathcal{E}_{\lambda,G}$ is
coercive, the sequence $\{u_j\}_j$ is
    bounded in $H^1_{g,G}(M)$. Therefore, up to a subsequence, Proposition \ref{Tintarev} implies
    that $\{u_j\}_j$ converges  weakly in $H^1_{g,G}(M)$ and
    strongly in $L^{p}(M)$, $p\in (2,2^*),$ to an element $u\in
    H^1_{g,G}(M)$. Note that
    \[
    \int_{M}| \nabla_g u_j-\nabla_g
    u|^2 \d+\int_{M}\left(  u_j-u\right)^2  \d=
    \]%
    \[
    (\mathcal{E}_{\lambda,G})^{\prime}(u_j)(u_j-u)
    +(\mathcal{E}_{\lambda,G})^{\prime}(u)(u-u_j)+\lambda
    \int_{M}\alpha(x)[f(u_j(x))-f(u(x))](u_j-u)\d.
    \]
    Since $\Vert(\mathcal{E}_{\lambda,G})^{\prime}(u_j)\Vert_{H^1_{g,G}(M)^{\ast}}\rightarrow0$  and $u_j%
    \rightharpoonup u$ in $H^1_{g,G}(M)$, the first
    two terms at the right hand side tend to $0$.
Let $p\in (2,2^*).$ By the  assumptions, for every $\eps>0$ there
exists a constant $C_\eps>0$ such that $|f(s)|\leq \eps
|s|+C_\eps|s|^{p-1}$ for every $s\in \mathbb R$.
 The latter relation, H\"older inequality and  the fact that  $u_j\rightarrow u$
     in $L^p(M)$ imply  that
    \[
    \left\vert
    \int_{M}\alpha(x)[f(u_j)-f(u)](u_j-u)\d\right\vert\rightarrow0,
    \] as $j\to \infty.$
    Therefore, $\Vert u_j-u\Vert_{H^1_g(M)}^{2} \to 0$ as $j\to \infty.$

{\sc Claim 3.}  {\it $\mathcal{E}_{\lambda,G}$ is sequentially
weakly lower semicontinuous for every $\lambda\geq0$.}
 First, since $\|\cdot \|_{H^1_g(M)}^2$ is convex, it is also sequentially weakly lower
semicontinuous on $H^1_{g,G}(M)$. We shall prove that if $u_j
\rightharpoonup u$ in $H^1_{g,G}(M)$, then $\ds\int_M \phi_{u_j}
u_j^2 \d \to \int_M \phi_{u} u^2 \d.$ To see this, by Proposition
\ref{Tintarev} we have (up to a subsequence) that
    that $\{u_j\}_j$ converges  to $u$
    strongly in $L^{p}(M)$, $p\in (2,2^*).$
Let us consider the  Maxwell equations $-\Delta_g
\phi_{u_j}+\phi_{u_j}=qu_j^2$ and $-\Delta_g \phi_u+\phi_u=qu^2$.
Subtracting one from another and multiplying the expression by
$(\phi_{u_j}-\phi_u)$,  an integration and H\"older inequality yield
that
$$\|\phi_{u_j}-\phi_u\|_{H^1_g(M)}^2=q\int_M(u_j^2-u^2)(\phi_{u_j}-\phi_u){\rm
d}v_g\leq
C\|u_j-u\|_{L^{\frac{4n}{n+2}}(M)}\|u_j+u\|_{H^1_g(M)}\|\phi_{u_j}-\phi_u\|_{H^1_g(M)},$$
for some $C>0.$ Since $\frac{4n}{n+2}<2^*$ (note that $n\leq 5$),
the first term of the right hand side tends to $0$, thus we get that
$\phi_{u_j}\to \phi_u$ in $H_{g,G}^1(M)$ as $j\to \infty$. Now, the
desired limit follows from a H\"older inequality.
%two linear functionals $\mathscr{K},\mathscr{K}_n:H^1_{g,G}(M)\to
%\mathbb{R}$,
%$$\mathscr{K}(v)=\int_{M}u^2v \d\hbox{ and
%}\mathscr{K}_n(v)=\int_{M}u_n^2v \d.$$ We have that the embedding
%$H^1_{g,G}(M)\hookrightarrow L^p(M)$ is compact (see Proposition
%\ref{Tintarev} or \cite{S-Tintarev}). Then using the H\"older
%inequality, we have that
%\begin{equation}\label{operator}
%    |\mathscr{K}(v)-\mathscr{K}_n(v)|\leq \left(\int_{M}|u_n^2-u^2|^{\frac{6}{5}}\d\right)^{\frac{5}{6}}\ \left(\int_{M}v^6\ \d\right)^{\frac{1}{6}}.
%\end{equation}
%This implies that $\mathscr{K}_n$ converges strongly to
%$\mathscr{K}$. In particular, $\phi_{u_n}\to \phi_u$ in
%$H^1_{g,G}(M)$. Then using again \eqref{operator}, one can conclude
%that $$\mathscr{G}(u_n)\to \mathscr{G}(u).$$

It remains to prove that $\mathcal{F}$
    is sequentially weakly continuous. To see this, let us suppose the contrary, i.e., let $\{u_j\}\subset H^1_{g,G}(M)$ be a sequence which converges weakly to
     $u\in H^1_{g,G}(M)$ and there exists $\varepsilon_0>0$ such that $0<\varepsilon_0\leq |\mathcal{F}(u_j)-\mathcal{F}(u)| \mbox{ for every }j \in \mathbb{N}.$
    As before,  $u_j \to u$ strongly in $L^p(M)$, $p\in (2,2^*)$. By  the mean value theorem one can see that for every $j\in \mathbb{N}$ there exists  $0<\theta_j<1$ such that
    $$0<\varepsilon_0\leq |\mathcal{F}(u_j)-\mathcal{F}(u)|\leq \int_M \alpha(x)|f(u+\theta_j(u_j-u))| |u_j-u|\d.$$ Now using assumptions ({$f_1$}) and
    ({$f_2$}),
   the right hand side of the above estimate tends to $0$,
     a contradiction. Thus, the energy functional $\mathcal{E}_{\lambda,G}$ is sequentially weakly lower semicontinuous.

\textsc{Claim 4.} (\textit{First solution}) By using assumptions
$(f_1)$ and $(f_2)$,  one has
$$\lim_{\mathscr H (u)\to 0}\frac{\mathcal{F}(u)}{\mathscr H
(u)}=\lim_{\mathscr H (u)\to \infty}\frac{\mathcal{F}(u)}{\mathscr H
(u)}= 0,$$ where $\ds
\mathscr{H}(u)=\frac{1}{2}\|u\|_{H^1_g(M)}^2+\frac{e}{4}\int_M
\phi_u u^2 \d$. Since $\alpha\in L^\infty(M)_+\setminus \{0\},$ on
account of
    ({$f_3$}), one can guarantee the existence of a suitable truncation
    function $u_{T}\in H^1_{g,G}(M)\setminus \{0\}$ such that
    $\mathcal{F}(u_T)>0.$ Therefore, we may define
    $$\ds\lambda_0=\inf_{\ds \substack{u\in H^1_{g,G}(M)\setminus\{0\} \\
            \mathcal{F}(u)>0}}\frac{\mathscr
            H(u)}{\mathcal{F}(u)}.$$
    The above limits imply that $0<\lambda_0<\infty.$ By \textsc{Claims} 1, 2 and 3,
    for every
    $\lambda>\lambda_0$, the functional $\mathcal{ E}_{\lambda,G}$ is
    bounded from below, coercive and satisfies the Palais-Smale condition. If we fix $\lambda>\lambda_0$ one can choose a function $w\in H^1_{g,G}(M)$
     such that $\mathcal{F}(w)>0$ and $\lambda>\frac{\mathscr H(w)}{\mathcal{F}(w)}\geq \lambda_0.$
     In particular,     $\ds c_1:=\inf _{H^{1}_{g,G}(M)}\mathcal E_{\lambda,G}\leq \mathcal E_{\lambda,G}(w)=\mathscr{H}(w)-\lambda \mathcal{F}(w)<0.$
    The latter inequality proves that the global minimum $u^1_{\lambda,G}\in H^{1}_{g,G}(M)$ of $\mathcal E_{\lambda,G}$ on $H^{1}_{g,G}(M)$ has
    negative energy level. In particular, $(u^1_{\lambda,G},\phi_{u^1_{\lambda,G}})\in H^{1}_{g,G}(M)\times H^{1}_{g,G}(M)$ is a  nontrivial weak solution to $(\mathcal{SM}_{\lambda})$.

\textsc{Claim 5.} (\textit{Second solution})  Let $q\in (2,2^*)$ be
fixed. By assumptions, for any $\eps>0$ there exists a constant
$C_\eps>0$ such that $$0\leq |f(s)|\leq
\frac{\eps}{\|\alpha\|_{L^\infty(M)}}|s|+C_\eps |s|^{q-1} \hbox{ for
all }s\in \mathbb{R}.$$ Then
\begin{align*}0\leq |\mathcal{F}(u)| &\leq \int_M
\alpha(x)|F(u(x))|\d \\ &\leq \int_M
\alpha(x)\left(\frac{\eps}{2\|\alpha\|_{L^\infty(M)}}u^2(x)+\frac{C_\eps}{q}|u(x)|^q\right)\d
\\ &\leq
\frac{\eps}{2}\|u\|^2_{H^1_g(M)}+\frac{C_\eps}{q}\|\alpha\|_{L^\infty(M)}
\kappa^q \|u\|^q_{H^1_g,(M)},\end{align*} where $\kappa_q$ is the
embedding constant in $H^1_{g,G}(M)\hookrightarrow L^q(M)$. Thus,
$$\mathcal{E}_{\lambda,G}(u)\geq
\frac{1}{2}(1-\lambda \eps)\|u\|^2_{H^1_g(M)}-\frac{\lambda
C_\eps}{q}\|\alpha\|_\infty \kappa_q^q \|u\|^q_{H^1_g(M)}.$$ Bearing
in mind that $q>2$, for enough small $\rho>0$ and $\eps
<\lambda^{-1}$ we have that
$$\ds
\inf_{\|u\|_{H^1_{g,G}(M)}=\rho}\mathcal{E}_{\lambda,G}(u)\geq
\frac{1}{2}\left(1-\eps \lambda\right) \rho-\frac{\lambda
C_\eps}{q}\|\alpha\|_{L^\infty(M)} \kappa_q^q \rho
^{\frac{q}{2}}>0.$$ A standard mountain pass argument (see
\cite{KVR,Willem}) implies the existence of a critical point
    $u^2_{\lambda,G}\in H^{1}_{g,G}(M)$ for $\mathcal E_{\lambda,G}$ with
    positive energy level. Thus
$(u^2_{\lambda,G},\phi_{u^2_{\lambda,G}})\in H^{1}_{g,G}(M)\times
H^{1}_{g,G}(M)$ is also a  nontrivial weak solution to
$(\mathcal{SM}_{\lambda})$. Clearly, $u^1_{\lambda,G}\neq
    u^2_{\lambda,G}$.
\hfill $\square$\\

\subsection{ Schr\"odinger-Maxwell systems involving oscillatory nonlinearities}

Before proving Theorem \ref{oszczero}, we need an auxiliary result.
Let us consider the system
\begin{equation*}
\ \left\{
\begin{array}{lll}
-\Delta_g u+u+e u\phi=\alpha(x)\widetilde f(u) & \mbox{in} &
M , \\
-\Delta_g \phi+\phi=qu^2 & \mbox{in} & M,
\end{array}%
\right. \eqno{(\widetilde{\mathcal{SM}}),}
\end{equation*}
where the following assumptions hold:
\begin{itemize}
    \item[$(\tilde f_1)$]\label{f1aux} $\widetilde f:[0,\infty)\to \mathbb{R}$ is a bounded function such that
    $f(0)=0$;
    \item[$(\tilde f_2)$]\label{f2aux} there are $0<a\leq b$ such that $\widetilde f(s)\leq 0$ for all $s\in [a,b]$.
\end{itemize}
Let  $x_0\in M$ be fixed, and $G\subset {\rm Isom}_g(M)$ and
$\alpha\in L^1(M)\cap L^\infty(M)$  be such that hypotheses
${\boldsymbol{\ds{(H_G^{x_0})}}}$ and
${\boldsymbol{\ds{(\alpha^{x_0})}}}$ are satisfied.
% Therefore $\ds
%C_\mathtt{m}=\max_{s \in\mathbb{R}}|\widetilde f(s)|<\infty.$

Let $\widetilde{\mathcal{E}}$ be the "one-variable" energy
functional associated with system $(\widetilde{\mathcal{SM}})$, and
 $\widetilde{ \mathcal{E}_G}$ be the restriction of
$\widetilde{\mathcal{E}}$ to the set $H^1_{g,G}(M)$. It is clear
that $\widetilde{\mathcal{E}}$ is well defined.
%\begin{lemma}\label{swlsc}
%    Functional $\widetilde{ \mathcal{E}_G}$ is sequentially weakly lower semicontinuous on $H^1_{g,G}(M)$.
%\end{lemma}
%\noindent The proof of this Lemma is similar to the \textsc{Claim 3.} in the
%proof of Theorem \ref{theorem-sublinear}, therefore we left to the
%reader.
Consider the number $b\in \mathbb{R}$ from ({${\tilde f_2}$}); for
further use, we introduce the  sets $$W^b=\{u\in H^1_{g}(M):
\|u\|_{L^\infty(M)}\leq b\}\ \ \hbox{ and }\ \  W^b_G=W^b\cap
H^1_{g,G}(M).$$
%It
%is clear that $\widetilde{ \mathcal{E}_G}$ is bounded below. Indeed,
%using the mean value theorem and the fact that $\ds \int_M \phi_u
%u^2\d \geq 0$, one has that $ \widetilde{ \mathcal{E}_G}(u)\geq
%-\frac{1}{2}c^2\|\alpha\|_{L^2(M)}^2. $
\begin{proposition}\label{auxiliarylemma}  Let $(M,g)$ be an $n-$dimensional homogeneous Hadamard manifold $(3\leq n\leq 5)$, $x_0\in M$ be fixed, and $G\subset {\rm Isom}_g(M)$ and
    $\alpha\in  L^1(M)\cap
    L^\infty(M)$  be
    such that hypotheses   ${\boldsymbol{\ds{(H_G^{x_0})}}}$ and ${\boldsymbol{\ds{(\alpha^{x_0})}}}$ are satisfied. If $\widetilde{f}:[0,\infty)\to \R$ is a continuous function
    satisfying $(\tilde f_1)$ and $(\tilde f_2)$ then
    \begin{itemize}
        \item[\rm{(i)}] the infimum of $\widetilde{ \mathcal{E}_G}$ on $W^b_G$  is attained at an element $u_G \in W^b_G;$
        \item[\rm{(ii)}] $u_G(x)\in [0,a]$ a.e. $x \in M;$
        \item[\rm{(iii)}] $(u_G,\phi_{u_G})$ is a  weak solution to system $(\widetilde{\mathcal{SM}})$.
    \end{itemize}
\end{proposition}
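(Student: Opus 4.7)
The plan is to follow a three-step strategy: the direct method for (i), a truncation comparison for (ii), and admissible variations combined with the principle of symmetric criticality for (iii). Starting with (i), the set $W^b_G$ is non-empty (it contains $0$) and weakly closed in $H^1_{g,G}(M)$: by Proposition 2.1 any weakly convergent sequence has an a.e.-convergent subsequence, so the pointwise bound $|u|\le b$ passes to the limit. Since $\widetilde F$ is bounded on $[-b,b]$ by $(\tilde f_1)$ and $\alpha\in L^1(M)$, the nonlinear term is uniformly bounded on $W^b_G$ while the first two summands of $\widetilde{\mathcal E_G}$ are nonnegative, giving boundedness from below and coercivity on $H^1_{g,G}(M)$. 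Weak lower semicontinuity is verified term by term: the norm squared is convex, $\int_M \phi_u u^2\d$ is sequentially weakly continuous (the Maxwell-subtraction plus H\"older estimate used in Claim 3 of \S 3.2 applies here because $n\le 5$), and $u\mapsto \int_M \alpha \widetilde F(u)\d$ is sequentially weakly continuous by dominated convergence along an $L^p$-strongly convergent subsequence, $p\in(2,2^*)$. The direct method then produces a minimizer $u_G\in W^b_G$.

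For (ii) I would argue by contradiction using the truncation $v:=\min\{\max\{u_G,0\},a\}\in W^b_G$, which is $G$-invariant and agrees with $u_G$ on $\{0\le u_G\le a\}$. The aim is $\widetilde{\mathcal E_G}(v)\le \widetilde{\mathcal E_G}(u_G)$, strictly whenever $A:=\{u_G<0\}\cup\{u_G>a\}$ has positive Riemannian measure, which would contradict minimality. The Sobolev chain rule gives $\nabla v=0$ a.e.\ on $A$ while $|v|\le |u_G|$ pointwise, so the $H^1$-norm term does not increase (and decreases strictly if $|A|>0$). For the Maxwell coupling, uniqueness in the Maxwell equation yields $\phi_w=\phi_{|w|}$, and $0\le |v|\le |u_G|$ combined with Lemma 3.1(ii) gives $\phi_v\le \phi_{u_G}$; together with $v^2\le u_G^2$ and $\phi_{u_G}\ge 0$ this produces $\int_M \phi_v v^2\d\le \int_M \phi_{u_G} u_G^2\d$. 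Splitting $M$ for the nonlinear term: on $\{u_G<0\}$ both $\widetilde F(v)$ and $\widetilde F(u_G)$ vanish by the zero-extension convention; on $\{0\le u_G\le a\}$ they coincide; on $\{u_G>a\}$, since $u_G\le b$, we have $\widetilde F(u_G)-\widetilde F(a)=\int_a^{u_G}\widetilde f(t)\,\mathrm{d}t\le 0$ by $(\tilde f_2)$. Using $\alpha\ge 0$ this gives $\int_M \alpha \widetilde F(u_G)\d\le \int_M \alpha \widetilde F(v)\d$, and the three estimates combine to produce the desired inequality.

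For (iii), with $u_G\in[0,a]$ a.e.\ and (generically) $a<b$, I would use admissible two-sided variations. For any $\eta\in H^1_{g,G}(M)\cap L^\infty(M)$ and $|\epsilon|\le (b-a)/\|\eta\|_{L^\infty(M)}$, the perturbation $u_G+\epsilon\eta$ satisfies $\|u_G+\epsilon\eta\|_{L^\infty(M)}\le b$ and is $G$-invariant, hence lies in $W^b_G$. Minimality gives $\widetilde{\mathcal E_G}(u_G+\epsilon\eta)\ge \widetilde{\mathcal E_G}(u_G)$, and sending $\epsilon\to 0^\pm$ forces $\widetilde{\mathcal E}'(u_G)[\eta]=0$. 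A generic $\eta\in H^1_{g,G}(M)$ is approximated in $H^1$-norm by the truncations $\eta_k:=\max\{\min\{\eta,k\},-k\}\in L^\infty(M)$ via dominated convergence on both function and gradient, so the identity extends to every $\eta\in H^1_{g,G}(M)$. The principle of symmetric criticality from \S 2.2 then upgrades $u_G$ to a critical point of $\widetilde{\mathcal E}$ on $H^1_g(M)$, and the equivalences recorded in \eqref{implikaciok} yield that $(u_G,\phi_{u_G})$ is a weak solution of $(\widetilde{\mathcal{SM}})$.

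The main technical obstacle is the Maxwell comparison in Step 2: because the map $u\mapsto \phi_u$ is non-local, the monotonicity $\phi_v\le\phi_{u_G}$ is not evident and hinges essentially on Lemma 3.1(ii). A secondary subtlety arises in Step 3 in the borderline case $a=b$, where the admissible $\epsilon$-range collapses; this can be handled by one-sided variations combined with the sign condition $(\tilde f_2)$ at $a$, since any upward perturbation at $\{u_G=a\}$ is penalized by $\widetilde f\le 0$, forcing the one-sided inequalities to match and hence equality.
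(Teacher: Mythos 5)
Your parts (i) and (ii) match the paper's argument in substance: the direct method on the weakly closed, convex set $W^b_G$, and the comparison of $u_G$ with its truncation $\gamma\circ u_G$, $\gamma(s)=\min(s_+,a)$, term by term. (Your justification of the Maxwell estimate — $\phi_u$ depends only on $u^2$, so $v^2\le u_G^2$ already gives $\phi_v\le\phi_{u_G}$ via the proof of Lemma \ref{comparison}(ii) — is in fact cleaner than the paper's literal claim ``$0\le w\le u_G$'', which fails on $\{u_G<0\}$.) The genuine divergence is in (iii). The paper views $u_G$ as a critical point of the non-smooth Szulkin functional $\widetilde{\mathcal E}+\chi_{W^b}$, applies the Kobayashi--\^Otani principle of symmetric criticality for such functionals to get the variational inequality $\widetilde{\mathcal E}'(u_G)(w-u_G)\ge 0$ for all $w\in W^b$, and then removes the constraint by testing with $\zeta\circ(u_G+\eps v)$ and controlling the exceptional sets $B_1,B_3$. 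You instead observe that (ii) places $u_G$ in $[0,a]$ with $a<b$, so the $L^\infty$-constraint is inactive under two-sided bounded $G$-invariant perturbations; this gives $\widetilde{\mathcal E}'(u_G)[\eta]=0$ for $\eta\in H^1_{g,G}(M)\cap L^\infty(M)$, extends by density and continuity of the derivative, and then only the smooth Palais principle already set up in Step 3 of \S 2.2 together with \eqref{implikaciok} is needed. This is correct and appreciably simpler, avoiding the non-smooth machinery altogether.

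The one caveat is the borderline case $a=b$ permitted by $(\tilde f_2)$: there your admissible $\eps$-range degenerates, and the one-sided fix you sketch is not complete — downward variations only give $\widetilde{\mathcal E}'(u_G)[\eta]\le 0$ for $\eta\ge 0$, and an upward variation supported on $\{u_G=b\}$ is not controlled by the sign of $\widetilde f$ alone, since the quadratic and Maxwell terms also contribute at first order. Two mitigating remarks: the paper's own argument is equally delicate at $a=b$ (the measure of $B_3=\{u_G+\eps v\ge b\}$ need not vanish as $\eps\to 0$ if $u_G=b$ on a set of positive measure), and in the only place the proposition is used (the proof of Theorem \ref{oszczero}) one has $a=\eta_j<\theta_j=b$ strictly, so your argument fully covers what is actually needed. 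If you want the statement in its full generality, the safest course is to fall back on the paper's variational-inequality route for that single degenerate case.
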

{\it Proof.} \noindent   (i)
 By using the same method as in  \textsc{Claim 3} of the proof of Theorem \ref{theorem-sublinear}, the functional $\widetilde{ \mathcal{E}_G}$ is sequentially weakly lower semicontinuous on
 $H^1_{g,G}(M)$. Moreover, $\widetilde{ \mathcal{E}_G}$ is bounded from below.
The set $W^b_G$ is convex and closed in $H^1_{g,G}(M)$, thus weakly
closed. Therefore, the claim directly follows; let $u_G \in W^b_G$
be the infimum of $\widetilde{ \mathcal{E}_G}$ on $W^b_G$.

    (ii) Let $A=\{x\in M: u_G(x)\notin[0,a]\}$ and suppose
that the Riemannian measure of $A$ is positive. We consider  the
function $\gamma(s)=\min(s_+,a)$ and set $w=\gamma \circ u_G$. Since
$\gamma$ is Lipschitz continuous, then
    $w\in H^1_{g}(M)$ (see Hebey, \cite[Proposition 2.5, page 24]{hebey}). We claim that $w \in H^1_{g,G}(M)$.
    Indeed, for every $x\in M$ and $\sigma\in G$,
      $$\sigma w(x)=w(\sigma^{-1}(x))=(\gamma \circ u_G)(\sigma^{-1}(x))=\gamma(u_G(\sigma^{-1}(x)))=\gamma(u_G(x))=w(x).$$
      By construction, we clearly have that  $w \in W^b_G$. Let $$A_1=\{x\in A: u_G(x)<0\} \hbox{ and } A_2=\{x\in A: u_G(x)>a\}.$$
    Thus $A=A_1\cup A_2$, and from the construction we have that $w(x)=u_G(x)$ for all $x\in M\setminus A$, $w(x)=0$ for all $x\in A_1$, and $w(x)=a$ for all $x\in A_2$.  Now we have that
    \begin{align*}\widetilde{ \mathcal{E}_G}(w)-\widetilde{ \mathcal{E}_G}(u_G)=&  -\frac{1}{2}\int_A |\nabla_g u_G|^2dv_g+\frac{1}{2}\int_A (w^2-u_G^2)\d+\frac{e}{4}\int_A (\phi_w w^2-\phi_{u_G}u_G^2)\ \d \\ &- \int_A \alpha(x)\left(\widetilde F(w)-\widetilde F(u_G)\right)\ \d.
    \end{align*}
    Note that $$\int_A \left(w^2-u_G^2\right)\d=-\int_{A_1}u_G^2\d+\int_{A_2}\left(a^2-u_G^2\right)\d\leq 0.$$
    It is also clear that $\ds\int_{A_1}\alpha(x)(\widetilde F(w)-\widetilde F(u_G))\d=0,$
    and due to the mean value theorem and ({${\tilde f_2}$}) we have that $\ds\int_{A_2}\alpha(x)(\widetilde F(w)-\widetilde F(u_G))\d\geq 0.$ Furthermore,  $$\int_A (\phi_w w^2-\phi_{u_G}u_G^2) \d =-\int_{A_1} \phi_{u_G}u_G^2\d+\int_{A_2} (\phi_w w^2-\phi_{u_G}u_G^2) \d,$$
    thus due to Lemma \ref{comparison} (ii), since $0\leq w\leq u_G$, we have that $\ds\int_{A_2}(\phi_w w^2-\phi_{u_G}u_G^2)\d \leq 0.$ Combining the above estimates,
    we have  $\ds \widetilde{ \mathcal{E}_G}(w)-\widetilde{ \mathcal{E}_G}(u_G)\leq 0.$

    On the other hand, since $w\in W_G^b$ then $\ds \widetilde{ \mathcal{E}_G}(w)\geq \widetilde{ \mathcal{E}_G}(u_G)=\inf_{W_G^b}\widetilde{ \mathcal{E}_G}$, thus we necessarily have that
    $$\int_{A_1}u_G^2 \d=\int_{A_2}(a^2-u_G^2)\d=0,$$
    which implies that the Riemannian measure of $A$ should be zero, a contradiction. \vspace{0.2cm}\\
    (iii) The proof is divided into two steps:

    \textsc{Claim 1.} \textit{ $\widetilde{ \mathcal{E}}'(u_G)(w-u_G)\geq 0 \hbox{ for all }w\in W^b$.} It is clear that the set $W^b$ is closed and convex in $H_g^1(M)$.
    Let $\chi_{W^b}$ be the indicator function of the set $W^b$, i.e., $\kf(u)=0$ if $u \in W^b$, and $\kf(u)=+\infty$ otherwise.
    Let us consider the Szulkin-type functional $\mathscr{K}:H_g^1(M)\to \mathbb R\cup \{+\infty\}$ given by $\mathscr{K}=\widetilde{ \mathcal{E}}+\kf$. On account of the definition of the set $W^b_G$,
    the restriction of $\kf$ to $H^1_{g,G}(M)$ is precisely the indicator function $\kfG$ of the set $W^b_{G}$. By (i), since
    $u_G$ is a local minimum point of $\widetilde{ \mathcal{E}_G}$ relative to the set $W^b_G$, it is also a local minimum point of the Szulkin-type functional
    $\mathscr{K}_G=\widetilde{ \mathcal{E}_G}+\kfG$ on $H^1_{g,G}(M)$. In particular,  $u_G$ is a critical point of $\mathscr{K}_G$ in the sense of Szulkin \cite{Szulkin}, i.e.,
    $$0\in \widetilde{ \mathcal{E}_G}'(u_G)+\partial \kfG(u_G) \hbox{ in } \left(H^1_{g,G}(M)\right)^\star,$$
    where $\partial$ stands for the subdifferential in the sense of convex
    analysis.
    By exploring the compactness of the group $G$, we may apply
    the principle of symmetric criticality for Szulkin-type functionals, see Kobayashi and \^Otani \cite[Theorem
    3.16]{KO}, obtaining that
     $$0\in \widetilde{ \mathcal{E}}'(u_G)+\partial \kf(u_G) \hbox{ in } \left(H^1_{g}(M)\right)^\star.$$
    Consequently,   we have for every  $w\in W^b$ that $$0\leq \widetilde{ \mathcal{E}}'(u_G)(w-u_G)+\kf(w)-\kf(u_G),$$ which proves the claim.

    \textsc{Claim 2.} \textit{$(u_G,\phi_{u_G})$ is a  weak solution to the system $(\widetilde{\mathcal{SM}})$.} By assumption $(\tilde f_1)$ it is clear that $\ds
    C_\mathtt{m}=\sup_{s \in\mathbb{R}}|\widetilde f(s)|<\infty.$ The previous step and \eqref{derivalt} imply that for all $w\in W^b$,
\begin{eqnarray*}
% \nonumber to remove numbering (before each equation)
  0 &\leq& \int_M \langle\nabla_g u_G ,\nabla_g(w-u_G)\rangle\d+\int_M u_G(w-u_G)\d \\
   &&+e\int_M u_G\phi_{u_G} (w-u_G)\d-\int_M \alpha(x)\widetilde
   f(u_G)(w-u_G)\d.
\end{eqnarray*}
Let us define the following function $$\zeta(s)=\left\{
    \begin{array}{ll}
    -b, & s<-b, \\
    s, & -b\leq s<b, \\
    b, & b\leq s.
    \end{array}
    \right.$$ Since $\zeta$ is Lipschitz continuous and $\zeta(0)=0$, then for fixed $\eps>0$ and $v\in H^1_g(M)$ the function
     $w_\zeta=\zeta\circ (u_G+\eps v)$ belongs to $H^1_g(M)$, see Hebey \cite[Proposition 2.5, page 24]{hebey}.
    By construction,  $w_\zeta \in W^b$.

    Let us denote by $B_1=\{x\in M:u_G+\eps v<-b\}$, $B_2=\{x\in M:-b\leq u_G+\eps v<b\}$ and
    $B_3=\{x\in M:u_G+\eps v\geq b\}$. Choosing $w=w_\zeta$ in the above inequality we have that $$0\leq I_1+I_2+I_3+I_4,$$
    where $$I_1=-\int_{B_1}|\nabla_g u_G|^2 \d+\eps \int_{B_2}\langle \nabla_g u_G,\nabla_g v\rangle \d-\int_{B_3}|\nabla_g u_G|^2\d,$$
     $$I_2=-\int_{B_1}u_G(b+u_G)\d+\eps \int_{B_2}u_Gv\d+\int_{B_3}(b-u_G)\d,$$
    $$I_3=-e\int_{B_1}u_G\phi_{u_G}(b+u_G)\d+\eps e\int_{B_2}u_G\phi_{u_G} v \d+e\int_{B_3}u_G\phi_{u_G}(b-u_G)\d,$$
    and  $$I_4=-\int_{B_1}\alpha(x)\widetilde f(u_G)(-b-u_G)\d-\eps \int_{B_2}\alpha(x)\widetilde f(u_G)v \d-\int_{B_3} \alpha(x) \widetilde f(u_G)(b-u_G)\d.$$
    After a rearrangement we obtain that
    \begin{align*}
    I_1+I_2+I_3+I_4 = & \eps \int_M \langle\nabla_g u_G,\nabla_g v \rangle\d+\eps \int_M u_Gv \d+\eps e\int_M u_G\phi_{u_G}v \d-\eps \int_M \alpha(x)f(u_G)v \d \\ &
    - \eps \int_{B_1}\langle\nabla_g u_G,\nabla_g v \rangle \d-\eps \int_{B_3}\langle \nabla_g u_G,\nabla_g v\rangle\d -\int_{B_1}|\nabla_g u_G|^2\d \\ &
    -\int_{B_3}|\nabla_g u_G|^2\d+\int_{B_1}(b+u_G+\eps v)\left(\alpha(x)\widetilde f(u_G)-u_G-eu_G\phi_{u_G}\right)\d \\ &+ \int_{B_3}(-b+u_G+\eps v)\left(\alpha(x)\widetilde f(u_G)-u_G-eu_G\phi_{u_G}\right)\d.
    \end{align*}
   Note that $$\int_{B_1}(b+u_G+\eps v)\left(\alpha(x)\widetilde f(u_G)-u_G-eu_G\phi_{u_G}\right)\d \leq -\eps \int_{B_1}\left(C_\mathtt{m} \alpha(x)+u_G+eu_G\phi_{u_G}\right)v \d,$$
    and
    $$\int_{B_3}(-b+u_G+\eps v)\left(\alpha(x)\widetilde f(u_G)-u_G-eu_G\phi_{u_G}\right)\d\leq \eps C_\mathtt{m} \int_{B_3}\alpha(x)v \d.$$
    Now, using the above estimates and dividing by $\eps>0$, we have that \begin{align*}0&\leq \int_M \langle \nabla_g u_G, \nabla_g v \rangle \d+\int_M u_Gv \d+e\int_M u_G\phi_{u_G}v \d-
    \int_M \alpha(x)\widetilde f(u_G)v \d \\ &-\int_{B_1}\left(\langle\nabla_g u_G,\nabla_g v\rangle+C_\mathtt{m} \alpha(x)v+u_Gv+eu_G\phi_{u_G}v\right)\d -\int_{B_3} \left(\langle\nabla_g u_G,\nabla_g v \rangle-
    C_\mathtt{m}\alpha(x)v\right)\d.
    \end{align*}
    Taking into account that the Riemannian measures for both sets  $B_1$ and $B_3$ tend to zero as   $\eps \to 0$, we get that $$0\leq  \int_M \langle\nabla_g u_G,\nabla_g v\rangle \d+\int_M u_Gv \d+e\int_M u_G\phi_{u_G}v \d-\int_M \alpha(x)\widetilde f(u_G)v \d.$$
    Replacing $v$ by  $(-v)$, it yields $$0=\int_M \langle\nabla_g u_G,\nabla_g v\rangle \d+\int_M u_Gv \d+e\int_M u_G\phi_{u_G}v \d-\int_M \alpha(x)\widetilde f(u_G)v \d,$$
    i.e., $\widetilde{ \mathcal{E}}'(u_G)=0.$ Thus $(u_G,\phi_{u_G})$ is a  $G-$invariant weak solution to $(\widetilde{\mathcal{SM}})$.
\hfill $\square$\\

Let $s>0$,  $0< r< \rho$ and
$A_{x_0}[r,\rho]=B_g(x_0,\rho+r)\setminus B_g(x_0,\rho-r)$ be an
annulus-type domain. For further use, we define the function
$w_s:M\to \mathbb{R}$ by $$w_s(x)=\left\{
    \begin{array}{ll}
    0, & x\in M\setminus A_{x_0}[r,\rho], \\
    s, & x\in A_{x_0}[r/2,\rho], \\
    \frac{2s}{r}(r-|d_g(x_0,x)-\rho|), & x\in A_{x_0}[r,\rho]\setminus
    A_{x_0}[r/2,\rho].
    \end{array}
    \right.$$
Note that  ${\boldsymbol{\ds{(H_G^{x_0})}}}$ implies  $w_s\in
H^1_{g,G}(M).$

\begin{proof}[Proof of Theorem \ref{oszczero}]
Due to ({$f_0^2$}) and the continuity of $f$ one can fix two
sequences $\{\theta_j\}_j, \{\eta_j\}_j$ such that $\ds
\lim_{j\to+\infty}\theta_j=\lim_{j\to+\infty}\eta_j=0,$ and for
every $j\in \N$, \begin{eqnarray}
    \label{kozrefogas} 0<\theta_{j+1}<\eta_j<s_j<\theta_j<1; \\\label{kozrefogas2} f(s)\leq 0 \hbox{ for every } s\in [\eta_j,\theta_j].
    \end{eqnarray}
   Let us introduce the auxiliary function  $f_j(s)=f(\min(s,\theta_j))$. Since  $f(0)=0$ (by ({$f_0^1$}) and ({$f_0^2$})), then  $f_j(0)=0$ and we may extend continuously
   the function $f_j$ to the whole real line by $f_j(s)=0$ if $s\leq 0$. For every $s\in \R$ and $j\in \N$,  we define $\ds F_j(s)=\int_0^s f_j(t){ \rm d}t.$
    It is clear that $f_j$ satisfies the assumptions ({$\tilde f_1$}) and ({$\tilde f_2$}). Thus, applying Proposition \ref{auxiliarylemma} to the function $f_j$,  $j\in\N$, the system \begin{equation}\label{oszcilaciohoz}
    \ \left\{
    \begin{array}{lll}
    -\Delta_g u+u+e u\phi=\alpha(x)f_j(u) & \mbox{in} &
    M , \\
    -\Delta_g \phi+\phi=qu^2 & \mbox{in} & M,
    \end{array}%
    \right.
    \end{equation} has a $G-$invariant weak solution $(u_j^0,\phi_{u_j^0})\in H^1_{g,G}(M)\times H^1_{g,G}(M)$ such that
    \begin{eqnarray}
    \label{elsokov} u_j^0 \in [0,\eta_j] \hbox{ a.e. } x\in M; \\
    \label{masodikkov} u_j^0 \hbox{ is the infimum of the functional }\mathcal{E}_j \hbox{ on the
    set}\
    W^{\theta_j}_G,
    \end{eqnarray}
    where $$\mathcal{E}_j(u)=\frac{1}{2}\|u\|_{H^1_g(M)}^2+\frac{e}{4}\int_M \phi_u u^2 \d-\int_M \alpha(x)F_j(u) \d.$$
By (\ref{elsokov}), $(u_j^0,\phi_{u_j^0})\in H^1_{g,G}(M)\times
H^1_{g,G}(M)$ is also a weak solution to the initial system
    $(\mathcal{SM})$.

    It remains to prove the existence of infinitely many distinct elements in the sequence $\{(u_j^0,\phi_{u_j^0})\}_j$.
   First, due to ${\boldsymbol{\ds{(\alpha^{x_0})}}}$,  there exist $0< r<
\rho$ such that ${\rm essinf}_{A_{x_0}[r,\rho]} \alpha>0.$ For
simplicity, let $D=A_{x_0}[r,\rho]$ and $K=A_{x_0}[r/2,\rho]$. By
({$f_0^1$}) there exist  $l_0>0$ and $\delta\in(0,\theta_1)$ such
that
\begin{equation}\label{becslesFrol}
    F(s)\geq -l_0 s^2 \hbox{ for every }s\in (0,\delta).
    \end{equation}
Again, ({$f_0^1$}) implies the existence of a non-increasing
sequence $\{\widetilde{s}_j\}_j\subset (0,\delta)$ such that
$\widetilde{s}_j\leq \eta_j$ and
\begin{equation}\label{becslesFrol-2}
F(\widetilde{s}_j)>L_0\widetilde{s}_j^2 \hbox{ for all }j\in \N,
    \end{equation}
where $L_0>0$ is enough large, e.g.,
\begin{equation}\label{L-null}
    L_0{\rm essinf}_K
\alpha>\frac{1}{2}\left(1+\frac{4}{r^2}\right){\rm
Vol}_g(D)+\frac{e}{4}\|\phi_\delta\|_{L^1(D)}+l_0
\|\alpha\|_{L^1(M)}.
\end{equation}
Note that
    \begin{align*}
    \mathcal{E}_j(w_{\widetilde{s}_j})      =& \frac{1}{2}\|w_{\widetilde{s}_j}\|_{H^1_g(M)}^2+\frac{e}{4}I_j-J_j,
    \end{align*}
where $$I_j=\int_D \phi_{w_{\widetilde{s}_j}} w_{\widetilde{s}_j}^2
\d \hbox{ and }J_j=\int_D \alpha(x)F_j(w_{\widetilde{s}_j})\d.$$ By
Lemma \ref{comparison} (ii) we have
%\begin{align*}I &= \int_K \phi_{w_{\widetilde{s}_j}} w_{\widetilde{s}_j}^2 \d+ \int_{D\setminus
%K}\phi_{w_{\widetilde{s}_j}} w_{\widetilde{s}_j}^2 \d \leq
%\widetilde{s}_j^2 \int_K \phi_\delta
%\d+\|\phi\|_{L^{2^*}(D\setminus K)}\|w\|^2_{L^m(D\setminus K)}  \\
%&\leq \widetilde{s}_j^2 \int_K \phi_w \d+\kappa_{m}^2
%\|\phi\|_{L^{2^*}(D)} \|w\|_{H_g^1(D)}^2 \leq \widetilde{s}_j^2
%\int_K \phi_w \d+\kappa_{m}^2
%\|\phi\|_{L^{2^*}(M)}\left(\nu\widetilde{s}_j^2+\widetilde{s}_j^2
%{\rm Vol}_g(D)\right).
%\end{align*}
$$I_j\leq \widetilde{s}_j^2\|\phi_\delta\|_{L^1(D)},\ j\in \mathbb N.$$
Moreover, by (\ref{becslesFrol}) and (\ref{becslesFrol-2}) we have
that
$$J_j\geq L_0 \widetilde{s}_j^2{\rm essinf}_K \alpha-l_0
\widetilde{s}_j^2\|\alpha\|_{L^1(M)},\ j\in \mathbb N.$$
%    \begin{align*}
%    \mathcal{E}_j(w) =&\frac{1}{2}\|w\|_{H^1_g(M)}^2+\frac{e}{4}\int_M \phi_w w^2 \d-\int_M \alpha(x)F_j(w) \d \\
%    =&\frac{1}{2}\|w\|_{H^1_g(M)}^2+ \frac{e}{4}\int_D \phi_w w^2 \d-\int_D \alpha(x)F_j(w) \d \\
%    \leq& \nu \widetilde{s}_j^2+\widetilde{s}_j^2{\rm Vol}_g(D)+\frac{e}{4}\int_K\widetilde{s}_j^2 \phi_w \d+\frac{e}{4}c\nu\widetilde{s}_j^2 \|\phi_w\|_{6} \\ &+c\widetilde{s}_j^2 {\rm Vol}_g(D\setminus K)\|\phi_w\|_6-\int_K \alpha(x)F(w)\d-\int_{D\setminus K}\alpha(x)F(w)\d \\\leq& \nu \widetilde{s}_j^2+\widetilde{s}_j^2 {\rm Vol}_g(D)+\frac{e}{4}\int_K\widetilde{s}_j^2 \phi_w \d+\frac{e}{4}c\nu\widetilde{s}_j^2 \|\phi_w\|_{6} \\ &+c\widetilde{s}_j^2{\rm Vol}_g(D\setminus K)\|\phi_w\|_6-L_0\widetilde{s}_j^2 \|\alpha\|_1+l_0\widetilde{s}_j^2 \|\alpha\|_1 \\ =& \left[\nu+{\rm Vol}_g(D)+\frac{e}{4}\int_K \phi_{w_{\tilde{s}_{0}}} \d+\frac{e}{4}c\nu\|\phi_{w_{\tilde{s}_{0}}}\|_{6}-L_0\|\alpha\|_1+l_0\|\alpha\|_1\right]\widetilde{s}_j^2.
%    \end{align*}
Therefore, \begin{align*} \mathcal{E}_j(w_{\widetilde{s}_j})&\leq
\widetilde{s}_j^2\left(\frac{1}{2}\left(1+\frac{4}{r^2}\right){\rm
Vol}_g(D)+\frac{e}{4}\|\phi_\delta\|_{L^1(D)}+l_0
\|\alpha\|_{L^1(M)}-L_0 {\rm essinf}_K \alpha\right).
\end{align*}
Thus, in one hand, by (\ref{L-null}) we have
\begin{equation}\label{neg-energia}
    \ds\mathcal{E}_j(u_j^0)=\inf_{W^{\theta_j}_G}\mathcal{E}_j\leq
\mathcal{E}_j(w_{\widetilde{s}_j})<0.
\end{equation}
 On the other hand, by
(\ref{kozrefogas}) and (\ref{elsokov}) we clearly have
$$\ds\mathcal{E}_j(u_j^0)\geq -\int_M \alpha(x)F_j(u_j^0)\d=-\int_M \alpha(x)F(u_j^0)\d\geq -\|\alpha\|_{L^1(M)}\max_{s\in [0,1]}|f(s)|\eta_j,\ j\in \mathbb N.$$
Combining the latter relations, it yields that $\ds\lim_{j\to
+\infty}\mathcal{E}_j(u_j^0)=0.$ Since
$\mathcal{E}_j(u_j^0)=\mathcal{E}_1(u_j^0) \hbox{ for all }j\in \N,$
   we obtain that the sequence $\{u_j^0\}_j$ contains infinitely many distinct elements. In particular, by (\ref{neg-energia}) we have that
    $\ds\frac{1}{2}\|u_j^0\|_{H^1_g(M)}^2\leq \|\alpha\|_{L^1(M)} \max_{s\in [0,1]}|f(s)|\eta_j,$ which implies that $\ds\lim_{j\to \infty}\|u_j^0\|_{H^1_g(M)}=0.$ Recalling (\ref{a-tulajdonsag}),
     we also have $\ds\lim_{j\to \infty}\|\phi_{u_j^0}\|_{H^1_g(M)}=0,$
%    Using Lemma \ref{auxiliarylemma} (i) and $\ds \lim_{j\to \infty}\eta_j=0$ we have that $$\lim_{j\to \infty}\|u_j^0\|_{\infty}=0,$$
    which concludes the proof.
\end{proof}
\begin{remark}\rm
        Using Proposition \ref{auxiliarylemma} (i) and $\ds \lim_{j\to \infty}\eta_j=0$, it follows that $\ds \lim_{j\to \infty}\|u_j^0\|_{L^\infty(M)}=0.$
\end{remark}

\section*{Acknowledgement}The authors were supported by the  grant of the Romanian National Authority for Scientific Research, "Symmetries in elliptic problems: Euclidean and
non-Euclidean techniques", CNCS-UEFISCDI, project no.
PN-II-ID-PCE-2011-3-0241. A. Krist\'aly is also supported by the
J\'anos Bolyai Research Scholarship of the Hungarian Academy of
Sciences.

\end{document}